\newtheorem{problem}{Problem}
\newtheorem{theorem}{Theorem}
\newtheorem{lemma}[theorem]{Lemma}
\newtheorem{proposition}[theorem]{Proposition}
\newtheorem*{conjecture}{Conjecture}
\newtheorem*{corollary}{Corollary}
\newtheorem{question}{Question}
\newtheorem*{definition}{Definition}
\newtheorem*{remark}{Remark}
\newtheorem*{conclusion}{Conclusion}
\newcommand{\R}{\mathbb{R}}
\newcommand{\Z}{\mathbb{Z}}
\newcommand{\Q}{\mathbb{Q}}
\newcommand{\C}{\mathbb{C}}
\DeclareMathOperator{\Reg}{Reg}
\author{Fabrizio Barroero}
\email{barroero@math.tugraz.at}
\author{Christopher Frei}
\email{frei@math.tugraz.at}
\author{Robert F. Tichy}
\email{tichy@tugraz.at}
\address{Institut f\"ur Mathematik A\\
Technische Universit\"at Graz\\
Steyrergasse 30, A-8010 Graz\\
Austria}
\title[Additive unit representations]{Additive unit representations in global fields -- A survey}
\dedicatory{Dedicated to K\'alm\'an Gy\H{o}ry, Attila Peth\H{o}, J\'anos Pintz and Andr\'as Sark\"ozy.}
\keywords{global fields, sums of units, unit sum number, additive unit representations}
\subjclass[2010]{00-02, 11R27, 16U60}
\thanks{F. Barroero is supported by the Austrian Science Foundation (FWF) project W1230-N13.\\
C. Frei is supported by the Austrian Science Foundation (FWF) project S9611-N23.}
\begin{document}

\begin{abstract}
We give an overview on recent results concerning additive unit representations. Furthermore the solutions of some open questions are included. The central problem is whether and how certain rings are (additively) generated by their units. This has been investigated for several types of rings related to global fields, most importantly rings of algebraic integers. We also state some open problems and conjectures which we consider to be important in this field.
\end{abstract}

\maketitle

\section{The unit sum number}

In 1954, Zelinsky \cite{Zelinsky1954} proved that every endomorphism of a vector space $V$ over a division ring $D$ is a sum of two automorphisms, except if $D = \Z/2\Z$ and $\dim V = 1$. This was motivated by investigations of Dieudonn{\'e} on Galois theory of simple and semisimple rings \cite{Dieudonne1948} and was probably the first result about the additive unit structure of a ring.

Using the terminology of V\'amos \cite{Vamos2005}, we say that an element $r$ of a ring $R$ (with unity $1$) is \emph{$k$-good} if $r$ is a sum of exactly $k$ units of $R$. If every element of $R$ has this property then we call $R$ \emph{$k$-good}. By Zelinsky's result, the endomorphism ring of a vector space with more than two elements is $2$-good. Clearly, if $R$ is $k$-good then it is also $l$-good for every integer $l > k$. Indeed, we can write any element of $R$ as
$$r = (r - (l-k)\cdot 1) + (l-k)\cdot 1,$$
and expressing $r - (l-k)\cdot 1$ as a sum of $k$ units gives a representation of $r$ as a sum of $l$ units.

Goldsmith, Pabst and Scott \cite{Goldsmith1998} defined the \emph{unit sum number} $u(R)$ of a ring $R$ to be the minimal integer $k$ such that $R$ is $k$-good, if such an integer exists. If $R$ is not $k$-good for any $k$ then we put $u(R) := \omega$ if every element of $R$ is a sum of units, and $u(R) := \infty$ if not. We use the convention $k < \omega < \infty$ for all integers $k$.

Clearly, $u(R) \leq \omega$ if and only if $R$ is generated by its units. Here are some easy examples from \cite{Goldsmith1998}:
\begin{itemize}
 \item $u(\Z) = \omega$,
 \item $u(K[X]) = \infty$, for any field $K$,
 \item $u(K) = 2$, for any field $K$ with more than $2$ elements, and
 \item $u(\Z/2\Z) = \omega$. 
\end{itemize}

Goldsmith, Pabst and Scott \cite{Goldsmith1998} were mainly interested in endomorphism rings of modules. For example, they proved independently from Zelinsky that the endomorphism ring of a vector space with more than two elements has unit sum number $2$, though they mentioned that this result can hardly be new.

Henriksen \cite{Henriksen1974} proved that the ring $M_n(R)$ of $n\times n$-matrices ($n\geq 2$) over any ring $R$ is $3$-good.

Herwig and Ziegler \cite{Herwig2002} proved that for every integer $n \geq 2$ there exists a factorial domain $R$ such that every element of $R$ is a sum of at most $n$ units, but there is an element of $R$ that is no sum of $n-1$ units.

The introductory section of \cite{Vamos2005} contains a historical overview of the subject with some references. We also mention the survey article \cite{Srivastava2010} by Srivastava.

In the following sections, we are going to focus on rings of ($S-$)integers in global fields.

\section{Rings of integers}

The central result regarding rings of integers in number fields, or more generally, rings of $S$-integers in global fields ($S \neq \emptyset$ finite), is that they are not $k$-good for any $k$, thus their unit sum number is $\omega$ or $\infty$. This was first proved by Ashrafi and V\'amos \cite{Ashrafi2005} for rings of integers of quadratic and complex cubic number fields, and of cyclotomic number fields generated by a primitive $2^n$-th root of unity. They conjectured, however, that it holds true for the rings of integers of all algebraic number fields (finite extensions of $\Q$). The proof of an even stronger version of this was given by Jarden and Narkiewicz \cite{Jarden2007} for a much more general class of rings:

\begin{theorem}\cite[Theorem 1]{Jarden2007}\label{JN}
If $R$ is a finitely generated integral domain of zero characteristic then there is no integer $n$ such that every element of $R$ is a sum of at most $n$ units.

In particular, we have $u(R) \geq \omega$, for any ring $R$ of integers of an algebraic number field.
\end{theorem}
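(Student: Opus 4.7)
The plan is a proof by contradiction combining two classical ingredients: first, for a finitely generated integral domain $R$ of characteristic zero, the unit group $R^*$ is finitely generated (a theorem of Samuel generalizing Dirichlet's unit theorem); second, $R$ possesses infinitely many maximal ideals with finite residue fields of arbitrarily large cardinality, by the Nullstellensatz for finitely generated $\mathbb{Z}$-algebras.

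Suppose for contradiction that every element of $R$ is a sum of at most $n$ units, and fix a presentation $R^* = \mu \times \langle u_1 \rangle \times \cdots \times \langle u_r \rangle$ with $\mu$ finite. I would select a maximal ideal $\mathfrak{m} \subset R$ with $\mathbb{F}_q := R/\mathfrak{m}$ finite; reduction sends units to units, so every element of $\mathbb{F}_q$ lies in $\overline{R^*} + \cdots + \overline{R^*}$ ($n$ summands), where $\overline{R^*}$ denotes the image subgroup. This forces $q \leq |\overline{R^*}|^n$. Since $\mathbb{F}_q^*$ is cyclic, $|\overline{R^*}|$ is the least common multiple of $|\bar\mu|$ and the orders $\operatorname{ord}(\bar u_i)$.

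The technical heart of the argument is to construct a family of maximal ideals with $q$ tending to infinity while $|\overline{R^*}|$ remains bounded by, say, $q^{1/(n+1)}$, contradicting the displayed inequality. I would attempt this via a Chebotarev-type argument in the Kummer tower obtained by adjoining $N$th roots of the generators $u_i$ to the fraction field of $R$, locating primes at which every $\bar u_i$ is forced into a bounded subgroup of $\mathbb{F}_q^*$ of small order. The main obstacle is precisely this simultaneous control over the orders of several independent units: a naive Kummer argument only yields primes where $\operatorname{ord}(\bar u_i)$ divides $(q-1)/N$, which is still far too large. I expect the clean way around this is to first reduce to the case of a ring of $S$-integers in a number field, via Noether normalization and a generic specialization preserving multiplicative independence of $u_1, \ldots, u_r$, and then invoke the Evertse--Schlickewei bound on the number of non-degenerate solutions of $S$-unit equations, which gives a uniform upper bound on the number of representations of a fixed element as a sum of $n$ units; comparison with Schanuel's asymptotic for the count of elements of $\mathcal{O}_{K,S}$ of bounded height then yields the desired contradiction.
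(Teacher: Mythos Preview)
Your proposal sketches two lines of attack, neither of which is carried to completion. The reduction-mod-$\mathfrak{m}$ idea is appealing, but as you yourself concede, controlling the order of the image of $R^*$ in the residue fields is the entire difficulty, and the Kummer/Chebotarev argument you outline does not pin these orders down tightly enough; you abandon this route without resolving the obstacle.

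The second route---reduce to a ring of $S$-integers, then combine the Evertse--Schlickewei bound with Schanuel's asymptotic---has a genuine gap at the final step. The Evertse--Schlickewei theorem bounds the number of non-degenerate representations of a \emph{fixed} element as a sum of $n$ units; it says nothing about how many \emph{distinct} elements are so representable. To contradict Schanuel you would need to show that the set of representable elements of height at most $X$ is $o$ of the total count, and a multiplicity bound alone cannot give this: there is no a priori control on the heights of the units $\varepsilon_i$ occurring in a representation of a given $\alpha$ (cancellation can be arbitrarily severe), so you cannot bound the representable $\alpha$'s by counting unit tuples in a box. As written, the step ``comparison with Schanuel's asymptotic \ldots\ yields the desired contradiction'' does not go through.

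The paper's argument uses the same $S$-unit equation input but pairs it with a different combinatorial tool. Jarden and Narkiewicz prove the following lemma: for each $n$ there is a constant $A_n(R)$ such that every arithmetic progression in $R$ of length exceeding $A_n(R)$ contains an element that is \emph{not} a sum of $n$ units. The lemma is obtained by combining the Evertse--Gy\H{o}ry bound on the number of solutions of $S$-unit equations with van der Waerden's theorem on arithmetic progressions. The theorem then follows at once, since a characteristic-zero domain contains $\Z$ and hence arithmetic progressions of every length. This Ramsey-theoretic ingredient (van der Waerden) is precisely what your sketch is missing; it is what converts finiteness of solutions to a single unit equation into a constraint on which elements of $R$ admit a representation at all.
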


This theorem is an immediate consequence of the following lemma, which Jarden and Narkiewicz proved by means of Evertse and Gy\H{o}ry's \cite{Evertse1988} bound on the number of solutions of $S$-unit equations combined with van der Waerden's theorem \cite{VanderWaerden1927} on arithmetic progressions.

\begin{lemma}\cite[Lemma 4]{Jarden2007}\label{JN_lemma}
If $R$ is a finitely generated integral domain of zero characteristic and $n \geq 1$ is an integer then there exists a constant $A_n(R)$ such that every arithmetic progression in $R$ having more than $A_n(R)$ elements contains an element which is not a sum of $n$ units.
\end{lemma}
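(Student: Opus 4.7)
The plan is to argue by contradiction and induct on $n$, combining van der Waerden's theorem with $S$-unit finiteness. Suppose $R$ contains an arithmetic progression $a, a+d, \ldots, a+Nd$ (with $d \neq 0$) each of whose terms is a sum of $n$ units, say $a + id = u_{i,1} + \cdots + u_{i,n}$. A crucial preliminary observation is that the unit group $R^*$ of a finitely generated integral domain of characteristic zero is itself finitely generated as an abelian group (a classical generalization of Dirichlet's unit theorem), so every $u_{i,l}$ lies in a fixed finitely generated multiplicative subgroup $G$ of the fraction field $K^*$.

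For the base case $n = 1$, write $a + id = u_i \in R^*$ for each $i$. The three-term relation $u_{i-1} + u_{i+1} = 2 u_i$ becomes $u_{i-1}/u_i + u_{i+1}/u_i = 2$, a solution of $x + y = 2$ in $G^2$. A direct expansion shows that $i \mapsto u_{i-1}/u_i = (a + (i-1)d)/(a + id)$ is injective in $i$, so the length $N$ is bounded by the (finite) number of solutions to $x + y = 2$ in $G$, furnished by the Evertse-Gy\H{o}ry theorem.

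For the inductive step, I would color each $i \in \{0, \ldots, N\}$ by the family $\mathcal{S}_i := \{S \subseteq \{1, \ldots, n\} : \sum_{l \in S} u_{i,l} = 0\}$ of vanishing subsums (at most $2^{2^n}$ colors) and apply van der Waerden to extract, for any prescribed length $L$, a monochromatic sub-AP of length $L$ once $N$ is large enough. If the common color contains a non-empty proper $S$, then on this sub-AP each $a + id = \sum_{l \notin S} u_{i,l}$ is a sum of strictly fewer than $n$ units, and by the inductive hypothesis $L \leq A_{n-1}(R)$. Otherwise the three-term AP identity $(a + id) + (a + kd) - 2(a + jd) = 0$ for every triple with $i + k = 2j$ in the sub-AP yields a vanishing linear relation among $3n$ units of $G$ with fixed coefficients in $\{+1, -2\}$; the Evertse-Schlickewei-Schmidt theorem bounds by some $c = c(n, G)$ the number of projective classes of non-degenerate solutions. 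If two triples $(i_1, j_1, k_1), (i_2, j_2, k_2)$ give proportional solutions by scalar $\lambda$, summing each of the three blocks forces $a + i_1 d = \lambda(a + i_2 d)$, $a + j_1 d = \lambda(a + j_2 d)$, $a + k_1 d = \lambda(a + k_2 d)$, which pins $\lambda$ down to $(j_1 - i_1)/(j_2 - i_2)$ and leaves only a one-parameter family of triples per class. Thus the number of available triples is at most $O(c \cdot L)$, contradicting the $\Theta(L^2)$ triples in an AP of length $L$ for $L$ sufficiently large.

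The main obstacle is the handling of cross-block vanishing subsums: the van der Waerden coloring rules out subsums vanishing inside one block $u_{i,\cdot}$, but subsums mixing terms from $u_{i,\cdot}, u_{j,\cdot}, u_{k,\cdot}$ (e.g., $u_{i,1} - 2 u_{j,2} = 0$) could still occur and must be excluded before Evertse-Schlickewei-Schmidt is applied in its non-degenerate form. Dealing with this requires either iterating the $S$-unit bound on the sub-equations produced when a vanishing subsum is extracted, or enlarging the van der Waerden coloring to record vanishing patterns on pairs or triples of indices. This combinatorial bookkeeping is the most delicate part of the argument, while the arithmetic input is provided cleanly by the $S$-unit finiteness theorems.
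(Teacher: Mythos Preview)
The paper is a survey and does not give its own proof of this lemma; it simply records that Jarden and Narkiewicz proved it ``by means of Evertse and Gy\H{o}ry's bound on the number of solutions of $S$-unit equations combined with van der Waerden's theorem on arithmetic progressions.'' Your proposal is built from exactly these two ingredients (with Evertse--Schlickewei--Schmidt in place of Evertse--Gy\H{o}ry, which is harmless), so at the level of strategy you are on the same track as the cited source.

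That said, what you have written is an outline rather than a proof, and you say so yourself. The acknowledged gap concerning \emph{cross-block} vanishing subsums is not a side issue: it is precisely the place where the argument has content. After your van der Waerden colouring you only control subsums of a \emph{single} representation $u_{i,1}+\cdots+u_{i,n}$; the $3n$-term relation coming from a $3$-term sub-AP can degenerate via subsums mixing the three blocks, and until those degenerations are handled the ESS bound simply does not apply. Both remedies you mention (iterating on the sub-equations, or enriching the colouring to record vanishing patterns across pairs/triples of indices) can be made to work, but carrying either of them out \emph{is} the proof. As written, the proposal identifies the right tools and the right obstruction but stops short of resolving it.

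There is also a small wrinkle in the induction. When the common colour contains a non-empty proper $S$, you conclude that each term of the sub-AP is a sum of $|\{1,\ldots,n\}\setminus S|<n$ units and invoke $A_{n-1}(R)$. Strictly speaking the inductive hypothesis concerns sums of exactly $n-1$ units, not ``fewer than $n$''. This is easy to repair, for instance by proving the formally stronger statement with ``at most $n$ units'' throughout, or by noting that a sum of $m$ units is also a sum of $m+2$ units; but it should be said.

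Finally, your base case $n=1$ is correct and self-contained: the injectivity of $i\mapsto (a+(i-1)d)/(a+id)$ together with the finiteness of solutions of $x+y=2$ in the finitely generated group $R^{*}$ does bound the length. The preliminary remark that $R^{*}$ is finitely generated for a finitely generated integral domain of characteristic zero is the right starting point and matches what is implicitly used in the literature.
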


Lemma \ref{JN_lemma} is a special case of a theorem independently found by Hajdu \cite{Hajdu2007}. Hajdu's result provides a bound for the length of arithmetic progressions in linear combinations of elements from a finitely generated multiplicative subgroup of a field of zero characteristic. Here the linear combinations are of fixed length and only a given finite set of coefficient-tuples is allowed. Hajdu used his result to negatively answer the following question by Pohst: Is it true that every prime can be written in the form $2^u \pm 3^v$, with non-negative integers $u$, $v$?

Using results by Mason \cite{Mason1986, Mason1986B} on $S$-unit equations in function fields, Frei \cite{Frei2010} proved the function field analogue of Theorem \ref{JN}. It holds in zero characteristic as well as in positive characteristic.

\begin{theorem}
Let $R$ be the ring of $S$-integers of an algebraic function field in one variable over a perfect field, where $S \neq \emptyset$ is a finite set of places. Then, for each positive integer $n$, there exists an element of $R$ that can not be written as a sum of at most $n$ units of $R$. In particular, we have $u(R) \geq \omega$.
\end{theorem}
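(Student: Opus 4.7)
The plan is to parallel the strategy of Jarden and Narkiewicz that proves Theorem \ref{JN}, substituting the Evertse--Gy\H{o}ry bound on the number of solutions of $S$-unit equations by the function field analogue due to Mason \cite{Mason1986, Mason1986B}. The first goal is a function field counterpart of Lemma \ref{JN_lemma}: for each $n$ there exists a constant $A_n(R)$ such that every arithmetic progression of length greater than $A_n(R)$ contained in $R$ has at least one term which is not a sum of $n$ units. The scheme is to assume that $N$ consecutive terms $a, a+d, \ldots, a+(N-1)d$ are all $n$-good, fix a representation of each as a sum of $n$ units, and partition $\{0,1,\ldots,N-1\}$ according to the combinatorial ``type'' of the representation (which proper subsums vanish, which units are constant multiples of one another, and so on). Applying van der Waerden's theorem to this coloring yields a long sub-arithmetic-progression along which the type is constant; eliminating $i$ from the resulting relations produces many solutions to a fixed non-degenerate $S$-unit equation, and Mason's bound forces $N \leq A_n(R)$.

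In zero characteristic this suffices, since $\mathbb{Z} \subseteq R$ provides arithmetic progressions of arbitrary length on which the lemma exhibits the desired element that is not a sum of $n$ units. The delicate case is positive characteristic $p$, where every arithmetic progression in $R$ trivially has at most $p$ terms, so van der Waerden cannot generate unbounded length. The Jarden--Narkiewicz machinery must therefore be retooled using a different infinite family. A natural substitute is the sequence $\{T^k\}_{k \geq 1}$, where $T \in R$ is transcendental over the constant field (for instance, a function whose pole divisor is supported on $S$). If every $T^k$ were a sum of at most $n$ units, one again obtains infinitely many solutions to a uniformly bounded family of $S$-unit equations, ready to be fed into Mason.

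The principal obstacle in positive characteristic is that Mason's bound, unlike Evertse--Gy\H{o}ry's, allows infinite families of ``degenerate'' solutions arising from the Frobenius endomorphism: whenever $\sum_j u_j = 0$ with $u_j$ units, we also have $\sum_j u_j^{p^m} = 0$ for every $m \geq 0$. A purported contradiction derived from a single $S$-unit equation may thus evaporate. The remedy I would pursue is to isolate the non-Frobenius content of the representations by a descent: iteratively apply the inverse of Frobenius (possible since the constant field is perfect) to each tuple of units in the representations, reducing to a sub-family whose differences are not $p$-th powers, and apply Mason's theorem in the sharper form which bounds the number of \emph{projectively non-degenerate} solutions. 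Making this descent compatible with the infinite family $\{T^k\}$ — so that enough non-degenerate solutions survive to trigger Mason — is the technical crux of the argument, after which Mason's bound closes the proof exactly as in the number field case.
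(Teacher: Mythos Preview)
The paper is a survey and does not itself prove this theorem; it merely cites \cite{Frei2010} and records that Mason's results on $S$-unit equations in function fields are the main tool. So there is no detailed argument in the paper to compare against beyond the indication that Mason replaces Evertse--Gy\H{o}ry.

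That said, your proposal has a genuine gap, and not only in positive characteristic. The function field analogue of Lemma~\ref{JN_lemma} that you set out to prove is simply \emph{false} whenever the constant field $K$ is infinite. Since $K^{*}\subset R^{*}$, every nonzero element of $K$ is already a unit of $R$; hence the arithmetic progression $1,2,3,\ldots$ in $\mathbb{Z}\subset K\subset R$ consists entirely of $1$-good elements and has arbitrary length. Thus the van der Waerden mechanism you import from Jarden--Narkiewicz cannot locate a non-$n$-good element inside $\mathbb{Z}$, and your characteristic-zero argument collapses. More structurally, the transplant fails because Mason's theorems bound the \emph{heights} of non-degenerate solutions of $S$-unit equations, not their \emph{number}; over an infinite constant field there are infinitely many constant solutions, so the counting step behind Lemma~\ref{JN_lemma} has no analogue here.

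Your second idea---examining the family $\{T^{k}\}_{k\ge 1}$ for a non-constant $T$ and arguing via heights---is in fact the correct replacement, and it works uniformly in every characteristic rather than only as a positive-characteristic patch. Enlarging $S$ to $S'=S\cup\{\text{zeros of }T\}$ makes each $T^{k}$ an $S'$-unit; dividing a putative representation $T^{k}=\varepsilon_{1}+\cdots+\varepsilon_{m}$ (with $m\le n$ and no vanishing subsum) by $T^{k}$ produces an $S'$-unit equation whose solutions have height at least $k\cdot\deg T$, contradicting Mason's bound once $k$ is large. In characteristic $p$ one must also rule out the Frobenius-degenerate case, and here a cleaner device than an open-ended descent is available: arrange that the order of $T^{k}$ at one of its zeros is coprime to $p$ (for instance by choosing $T$ with a simple zero at some $P_{0}\notin S$ and taking $k$ coprime to $p$), which forces $v_{P_{0}}(\varepsilon_{i}/T^{k})\not\equiv 0\pmod p$ and hence prevents $\varepsilon_{i}/T^{k}$ from lying in $F^{p}$. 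In short: abandon the arithmetic-progression lemma entirely and run the height argument directly.
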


We will later discuss criteria which show that in the number field case as well as in the function field case, both possibilities $u(R) = \omega$ and $u(R) = \infty$ occur.

\section{The qualitative problem}

\begin{problem}\cite[Problem A]{Jarden2007}\label{qual_problem}
Give a criterion for an algebraic extension $K$ of the rationals to have the property that its ring of integers $R$ has unit sum number $u(R)\leq \omega$.
\end{problem}

Jarden and Narkiewicz provided some easy examples of infinite extensions of $\Q$ with $u(R) \leq \omega$: By the Kronecker-Weber theorem, the maximal Abelian extension of $\Q$ has this property. Further examples are the fields of all algebraic numbers and all real algebraic numbers.

More criteria are known for algebraic number fields of small degree. Here, the only possibilities for $u(R)$ are $\omega$ and $\infty$, by Theorem \ref{JN}. For quadratic number fields, Belcher \cite{Belcher1974}, and later Ashrafi and V\'amos \cite{Ashrafi2005}, proved the following result:

\begin{theorem}\cite[Lemma 1]{Belcher1974}\cite[Theorems 7, 8]{Ashrafi2005}
Let $\Q(\sqrt{d})$, $d \in \Z$ squarefree, be a quadratic number field with ring of integers $R$. Then $u(R) = \omega$ if and only if 
\begin{enumerate}[1.]
 \item $d \in \{-1, -3\}$, or
 \item $d > 0$, $d \not\equiv 1 \mod 4$, and $d+1$ or $d-1$ is a perfect square, or
 \item $d > 0$, $d \equiv 1 \mod 4$, and $d+4$ or $d-4$ is a perfect square.
\end{enumerate}
\end{theorem}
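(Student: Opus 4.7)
The plan is to reduce everything to a single question about a fixed integral basis $\{1, \theta\}$ of $R$, where $\theta = \sqrt{d}$ when $d \not\equiv 1 \pmod 4$ and $\theta = (1+\sqrt{d})/2$ when $d \equiv 1 \pmod 4$. Since $1$ is itself a unit, the additive subgroup $U \subseteq R$ generated by all units automatically contains $\Z$; writing each unit as $a + b\theta$ with $a,b \in \Z$, one then has $U = R$ if and only if $\theta \in U$, equivalently
\[
\gcd\{\,b : a + b\theta \in R^\times\,\} = 1.
\]

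The imaginary case is essentially immediate. For $d < 0$ with $d \notin \{-1,-3\}$, the unit group is just $\{\pm 1\}$, so every $b$ vanishes and $U = \Z \ne R$, whence $u(R) = \infty$. For $d = -1$ the unit $i$ has $\theta$-coordinate $1$, and for $d = -3$ the element $\theta = (1+\sqrt{-3})/2$ is itself a primitive sixth root of unity and therefore a unit of $\theta$-coordinate $1$. In both cases the gcd above is $1$, so $u(R) = \omega$.

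For real quadratic fields $d > 0$, Dirichlet's unit theorem gives a fundamental unit $\varepsilon = a_1 + b_1 \theta$ such that every unit is $\pm \varepsilon^n$ with $n \in \Z$. The key technical step is to show that $b_1 \mid b_n$ for every $n \in \Z$, where $\varepsilon^n = a_n + b_n \theta$. This follows from the cleaner observation that $\varepsilon \equiv a_1 \pmod{b_1 R}$ (since $\varepsilon - a_1 = b_1 \theta \in b_1 R$), hence $\varepsilon^n \equiv a_1^n \pmod{b_1 R}$, so $b_n \in b_1 \Z$; applying the same argument to $\varepsilon^{-1}$ handles negative exponents. Consequently the gcd in question equals $|b_1|$, and $u(R) = \omega$ is equivalent to $|b_1| = 1$, i.e.\ to the existence of a unit of the form $a + \theta$ for some $a \in \Z$.

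The remaining task is to rewrite the existence of such a unit as the stated condition on $d$. In the case $\theta = \sqrt{d}$, the norm equation $N(a + \theta) = a^2 - d = \pm 1$ is equivalent to $d \pm 1$ being a perfect square. In the case $\theta = (1 + \sqrt{d})/2$, one computes
\[
N(a + \theta) = a^2 + a + \tfrac{1-d}{4},
\]
so $N(a+\theta) = \pm 1$ rewrites as $(2a+1)^2 = d \pm 4$; since $d \equiv 1 \pmod 4$ is odd, this is equivalent to $d \pm 4$ being a perfect square. Conversely, each square condition on $d$ produces an explicit $a \in \Z$ making $a+\theta$ a unit, which forces $b_1 \mid 1$ and thus $u(R) = \omega$. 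The only nontrivial point in the whole argument is the divisibility $b_1 \mid b_n$; the rest is bookkeeping with norms.
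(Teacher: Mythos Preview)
The paper is a survey and does not contain its own proof of this theorem; it merely cites the result from Belcher and from Ashrafi--V\'amos. So there is no in-paper argument to compare against.

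Your argument is correct and is essentially the classical one. Two small remarks. First, strictly speaking you only show that $R$ is additively generated by its units; to conclude $u(R)=\omega$ rather than some finite value you should invoke Theorem~\ref{JN} (Jarden--Narkiewicz), which the paper has already recorded and which forces $u(R)\in\{\omega,\infty\}$ for every ring of integers. Second, the treatment of negative exponents is slightly elliptic: the cleanest way to phrase it is to observe that $\varepsilon^{-1}=\pm\bar\varepsilon$ and that $\bar\theta=-\theta$ (resp.\ $\bar\theta=1-\theta$) in the two cases, so $\varepsilon^{-1}$ again has $\theta$-coordinate $\pm b_1$, and then your congruence $\varepsilon\equiv a_1\pmod{b_1R}$ applied to $\varepsilon^{-1}$ gives $b_1\mid b_{-n}$ exactly as for positive $n$. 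With those two clarifications the proof is complete.
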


A similar result for purely cubic fields was found by Tichy and Ziegler \cite{Tichy2007}.

\begin{theorem}\cite[Theorem 2]{Tichy2007}
Let $d$ be a cubefree integer and $R$ the ring of integers of the purely cubic field $\Q(\sqrt[3]{d})$. Then $u(R) = \omega$ if and only if
\begin{enumerate}[1.]
 \item $d$ is squarefree, $d \not\equiv \pm 1 \mod 9$, and $d + 1$ or $d - 1$ is a perfect cube, or
 \item $d = 28$.
\end{enumerate}
\end{theorem}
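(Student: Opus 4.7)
The strategy rests on the reduction $u(R)\le\omega \iff R=\Z[\epsilon]$, where $\epsilon$ is any fundamental unit. Since $\Q(\sqrt[3]{d})$ has signature $(1,1)$, Dirichlet's theorem yields $R^\times=\{\pm 1\}\times\langle\epsilon\rangle$; and because $\epsilon$ is an algebraic integer of degree three with $N(\epsilon)=\pm 1$, its inverse lies in $\Z[\epsilon]$, so the subring and the $\Z$-module generated by $R^\times$ both equal $\Z[\epsilon]$. The problem thus becomes: when does the order generated by a fundamental unit coincide with the maximal order?

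Setting $\theta=\sqrt[3]{d}$ and writing $\epsilon=x+y\theta+z\theta^2$ in the $\Q$-basis $\{1,\theta,\theta^2\}$, a direct expansion of $\epsilon^2$ shows that the matrix expressing $(1,\epsilon,\epsilon^2)$ relative to $(1,\theta,\theta^2)$ has determinant $y^3-dz^3$. Consequently $R=\Z[\epsilon]$ reduces to the Diophantine system
\[
N(\epsilon)=x^3+dy^3+d^2z^3-3dxyz=\pm 1,\qquad |y^3-dz^3|=1/[R:\Z[\theta]],
\]
together with the requirement that $(x,y,z)$ be coordinates of an element of $R$. Here the classical index formula for pure cubic fields gives $[R:\Z[\theta]]=1$ when $d$ is squarefree with $d\not\equiv\pm 1\bmod 9$ (case 1), $[R:\Z[\theta]]=b$ when $d=ab^2$ with $a,b$ coprime squarefree and $a^2\not\equiv b^2\bmod 9$, and $[R:\Z[\theta]]=3b$ when $a^2\equiv b^2\bmod 9$.

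In case 1 one has $x,y,z\in\Z$ and $|y^3-dz^3|=1$. The branch $z=0$ forces $y=\pm 1$, and the norm equation then reduces to $x^3\pm d=\pm 1$, i.e.\ $d\pm 1$ a cube; conversely, if $d=m^3\pm 1$ then $\theta-m$ is a unit of norm $\mp 1$, and $\theta=m+(\theta-m)$ exhibits $\theta$ as a sum of two units, so $R=\Z[\theta]$ is generated by $R^\times$. For $z\neq 0$, the element $y-z\theta$ has norm $y^3-dz^3=\pm 1$ and is therefore itself a unit $\pm\epsilon^k$ for some $k\in\Z$. Matching $\theta^2$-coefficients rules out $|k|\le 1$ except $k=-1$, and in that case the vanishing $y^2-xz=0$ combined with $N(\epsilon)=(y^3-dz^3)^2/z^3=\pm 1$ gives $z=\pm 1$ and hence $d=y^3\pm 1$. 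The remaining cases $|k|\ge 2$ are eliminated by the Delone--Nagell bound on cubic Thue equations combined with size estimates on powers of $\epsilon$. Thus in every subcase of case 1 the cube condition on $d$ is forced.

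In case 2 the coordinates $y,z$ may have denominator dividing $[R:\Z[\theta]]$. For $b>1$ with $a^2\not\equiv b^2\bmod 9$, the equation $|y^3-dz^3|=1/b$ admits no compatible integrality by a mod-$b$ divisibility argument, eliminating this entire subcase. For $a^2\equiv b^2\bmod 9$ (index $3b$) a similar reduction together with the congruence restriction pins $(a,b)$ to a finite list, and combining the Delone--Nagell bound with explicit norm-equation checks leaves only $d=28$. For this $d$ the unit $\mu=(1+\sqrt[3]{28}-\sqrt[3]{98})/3$ satisfies $\mu^3-\mu^2+5\mu+1=0$ (so $N(\mu)=-1$) and $\mu^2=\sqrt[3]{28}-3$; expressing $1,\mu,\mu^2$ in an integral basis of $R$ yields a change-of-basis matrix of determinant $\pm 1$, so $\Z[\mu]=R$ and $u(R)=\omega$. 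The most delicate step of the proof is precisely this final pruning in case 2: isolating $d=28$ as the unique exception requires a carefully combined Thue/Delone--Nagell analysis with congruence sieving modulo~$9$.
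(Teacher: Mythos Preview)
The paper under review is a survey and does not contain a proof of this theorem; it merely quotes the result from Tichy--Ziegler \cite{Tichy2007}. There is therefore no proof in the paper to compare your attempt against.

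That said, your overall architecture is the right one and is indeed the strategy of the original Tichy--Ziegler argument: reduce $u(R)\le\omega$ to $R=\Z[\epsilon]$ via the rank-one unit group, translate this into the index condition $|y^3-dz^3|=[R:\Z[\theta]]^{-1}\cdot[R:\Z[\epsilon]]$ (so $=1$ exactly when $\Z[\epsilon]=R$), and then run a case split according to Dedekind's classification of integral bases of pure cubic fields. Your computations in case~1 for $z=0$ and for $k=-1$ (via $y^2-xz=0$ and $N(\epsilon)=(y^3-dz^3)^2/z^3$) are correct and rather elegant.

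However, two places in your write-up are not proofs but promissory notes. First, in case~1 with $z\neq 0$ and $|k|\ge 2$, you invoke ``the Delone--Nagell bound combined with size estimates on powers of $\epsilon$'' without carrying anything out; the actual argument requires controlling the coefficients of $\epsilon^k$ against those of $\epsilon$ itself, and this is where the genuine Thue-equation input (that $X^3-dY^3=\pm1$ has at most one nontrivial solution, realised by the fundamental unit) does real work. Second, and more seriously, your case~2 is essentially empty: the sentences ``a mod-$b$ divisibility argument'' and ``pins $(a,b)$ to a finite list'' are not arguments, and isolating $d=28$ is by far the hardest part of the theorem. In the original proof this step requires a careful analysis of the fundamental unit in the non-monogenic cases and cannot be summarised as you have. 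As written, your proposal is a correct outline of the easy direction and of the monogenic case, but it does not establish the theorem.
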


Filipin, Tichy and Ziegler used similar methods to handle purely quartic complex fields $\Q(\sqrt[4]{d})$. Their criterion \cite[Theorem 1.1]{Filipin2008} states that $u(R) = \omega$ if and only if $d$ is contained in one of six explicitly given sets.

As a first guess, one could hope to get information about the unit sum number of the ring of integers of a number field $K$ by comparing the regulator and the discriminant of $K$. In personal communication with the authors, Martin Widmer pointed out the following sufficient criterion for the simple case of complex cubic fields:

\begin{proposition}(Widmer)
If $R$ is the ring of integers of a complex cubic number field $K$ then $u(R) = \omega$ whenever the inequality
\begin{equation}\label{disc_reg}|\Delta_K| > (e^{\frac{3}{4}R_K} + e^{-\frac{3}{4}R_K})^4\end{equation}
holds. Here, $\Delta_K$ is the discriminant and $R_K$ is the regulator of $K$.
\end{proposition}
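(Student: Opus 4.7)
The plan is to prove that the hypothesis implies the stronger statement $R = \Z[\varepsilon]$, where $\varepsilon$ is a fundamental unit of $R$; from this $u(R) \leq \omega$ follows immediately. Indeed, since $K$ has signature $(1,1)$, the unit rank is one and $R^\times = \{\pm 1\} \times \langle \varepsilon \rangle$. Granting $R = \Z[\varepsilon]$, every $r \in R$ takes the form $r = a_0 + a_1\varepsilon + a_2\varepsilon^2$ with $a_i \in \Z$, and since each $\pm\varepsilon^i$ is a unit, $r$ is a sum of units (writing $0 = 1+(-1)$ when necessary).

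To establish $R = \Z[\varepsilon]$ I will invoke the classical identity $|\Delta(\varepsilon)| = [R:\Z[\varepsilon]]^2\,|\Delta_K|$, where $\Delta(\varepsilon)$ denotes the discriminant of the minimal polynomial of $\varepsilon$. This is valid because $[K:\Q] = 3$ is prime and $\varepsilon$, not being a root of unity, lies outside $\Q$, so $K = \Q(\varepsilon)$. It then suffices to prove the upper bound
\[
|\Delta(\varepsilon)| \leq 4\bigl(e^{3R_K/4} + e^{-3R_K/4}\bigr)^4,
\]
since combined with the hypothesis (\ref{disc_reg}) this forces $[R:\Z[\varepsilon]]^2 < 4$, so the index equals $1$.

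For the discriminant estimate, let $\alpha \in \R$ and $\beta, \bar\beta \in \C$ denote the three conjugates of $\varepsilon$. After replacing $\varepsilon$ by $\pm\varepsilon^{\pm 1}$ if necessary one has $\alpha > 1$; then $\alpha = e^{R_K}$ by definition of the regulator, and $|\beta| = e^{-R_K/2}$ follows from $N_{K/\Q}(\varepsilon) = \alpha|\beta|^2 = \pm 1$. Using $(\alpha - \beta)(\alpha - \bar\beta) = |\alpha-\beta|^2$ and $(\beta - \bar\beta)^2 = -4(\mathrm{Im}\,\beta)^2$, a direct computation gives
\[
|\Delta(\varepsilon)| = 4|\alpha - \beta|^4(\mathrm{Im}\,\beta)^2 \leq 4(\alpha + |\beta|)^4 |\beta|^2,
\]
where I used the triangle inequality and $|\mathrm{Im}\,\beta| \leq |\beta|$. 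Factoring $e^{-R_K/2}$ from $\alpha + |\beta| = e^{R_K} + e^{-R_K/2}$ inside the fourth power and using $|\beta|^2 = e^{-R_K}$ converts the right-hand side into $4(e^{3R_K/4} + e^{-3R_K/4})^4$, which is the desired bound.

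The argument is essentially mechanical once one observes that $R_K$ completely pins down the archimedean absolute values of the three conjugates of $\varepsilon$. The only genuinely delicate point is numerical: the factor of $4$ arising naturally from $(\beta - \bar\beta)^2 = -4(\mathrm{Im}\,\beta)^2$ in the discriminant computation must match the factor of $4$ needed in $[R:\Z[\varepsilon]]^2 < 4$ — this is precisely what rules out a possible index of $2$ under a hypothesis that itself carries no factor of $4$.
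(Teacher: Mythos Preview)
Your proof is correct and follows essentially the same approach as the paper: reduce to showing $R=\Z[\varepsilon]$ via the index--discriminant relation, then bound the discriminant of the fundamental unit using $|\beta|=\eta^{-1/2}$ from the norm condition. The only cosmetic difference is that the paper phrases the comparison in terms of lattice covolumes under the Minkowski embedding (computing an explicit $3\times 3$ determinant in the real and imaginary parts of $\eta'$), whereas you use the product formula $\Delta(\varepsilon)=\prod_{i<j}(\sigma_i\varepsilon-\sigma_j\varepsilon)^2$ directly; the two computations are equivalent and yield the identical bound $(\eta^{3/4}+\eta^{-3/4})^2$ on $\sqrt{|\Delta(\varepsilon)|}/2$.
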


\begin{proof}
Regard $K$ as a subfield of the reals, and let $\eta > 1$ be a fundamental unit, so $R_K = \log \eta$. Since $K$ contains no roots of unity except $\pm 1$, the ring of integers $R$ is generated by its units if and only if $R = \Z[\eta]$. By the standard embedding $K \to \R \times \C \simeq \R^3$, we can regard $R$ and $\Z[\eta]$ as lattices in $\R^3$ and compare their determinants. Let $\eta'=x+iy$ be one of the non-real conjugates of $\eta$. We get $u(R) = \omega$ if and only if
$$2^{-1} \sqrt{|\Delta_K|} = \left| \det\begin{pmatrix}1 & \eta & \eta^2\\ 1 & x & x^2 - y^2\\ 0 & y  & 2 x y \end{pmatrix}\right|\text.$$
Since the right-hand side of the above equality is always a multiple of the left-hand side, we have $u(R) = \omega$ if and only if 
$$\sqrt{|\Delta_K|} > \left| \det\begin{pmatrix}1 & \eta & \eta^2\\ 1 & x & x^2 - y^2\\ 0 & y  & 2 x y \end{pmatrix}\right|\text.$$
Clearly, $\eta^{-1} = \eta' \overline{\eta'} = x^2 +y^2$, whence $|x|$, $|y| \leq \eta^{-1/2}$. With this in mind, a simple computation shows that the right-hand side of the above inequality is at most $\eta^{-3/2} + 2 + \eta^{3/2}$, so \eqref{disc_reg} implies that $u(R) = \omega$.
\end{proof}

To see that condition \eqref{disc_reg} is satisfied in infinitely many cases, we consider the complex cubic fields $K_N = \Q(\alpha_N)$, where $\alpha_N$ is a root of the polynomial
\begin{equation}\label{cubic_poly}f_N = X^3 + N X + 1\text,\end{equation}
with a positive integer $N$ such that $4 N^3 + 27$ is squarefree. By \cite{Erdos1953}, infinitely many such $N$ exist. We may assume that $\alpha_N \in \R$. From \eqref{cubic_poly}, we get
$$\frac{N^2}{N^3 + 1} < -\alpha_N = \frac{1}{\alpha_N^2 + N} < 1/N\text.$$
Since $-1/\alpha_N$ is a unit of the ring of integers of $K_N$, and $N < -1/\alpha_N < N + 1/N^2$, we have $R_K \leq \log(N + 1/N^2)$.
The discriminant $-4N^3 - 27$ of $f_N$ is squarefree by hypothesis, so $|\Delta_K| = 4 N^3 + 27$. Now we see by a simple computation that \eqref{disc_reg} holds.

In the function field case, Frei \cite{Frei2010} investigated quadratic extensions of rational global function fields.

\begin{theorem}\cite[Theorem 2]{Frei2010}\label{qualitative_function_field}
Let $K$ be a finite field, and $F$ a quadratic extension field of the rational function field $K(x)$ over $K$. Denote the integral closure of $K[x]$ in $F$ by $R$. Then the following two statements are equivalent.
\begin{enumerate}[1.]
 \item $u(R) = \omega$
 \item The function field $F|K$ has full constant field $K$ and genus $0$, and the infinite place of $K(x)$ splits into two places of $F|K$.
\end{enumerate}
\end{theorem}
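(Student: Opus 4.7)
The plan is to handle the two implications separately. The reverse direction $(2) \Rightarrow (1)$ is structural, while $(1) \Rightarrow (2)$ rests on a Riemann--Roch dimension count after pinning down the constant field and the splitting behaviour at infinity.

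For $(2) \Rightarrow (1)$, a genus-zero function field with a degree-one place is rational, so $F \cong K(t)$ as $K$-algebras. A M\"obius substitution places the two places above infinity at $t = 0$ and $t = \infty$, whence the integral closure $R$ of $K[x]$ in $F$ becomes $K[t, t^{-1}]$. Every element of $R$ is then a Laurent polynomial $\sum c_k t^k$; each nonzero monomial $c_k t^k$ is a unit, so every element of $R$ is a finite sum of units. Combined with the function-field analogue of Theorem~\ref{JN}, this yields $u(R) = \omega$.

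For $(1) \Rightarrow (2)$, let $L$ be the full constant field of $F$ and let $S$ denote the set of places of $F$ above the infinite place of $K(x)$. The function-field $S$-unit theorem gives $R^{\times} \cong L^{\times} \times \Z^{|S|-1}$, with $L^{\times}$ finite. If $|S| = 1$ then $R^{\times} \subseteq L$, so every sum of units lies in the algebraic extension $L$ of $K$, contradicting $x \in R \setminus L$; hence $|S| \geq 2$. Next, the tower $K(x) \subseteq L(x) \subseteq F$ and $[F:K(x)] = 2$ force $[L:K] \in \{1, 2\}$; if $[L:K] = 2$, then $F = L(x)$ and the infinite place of $K(x)$ has only the infinite place of $L(x)$ above it, yielding $|S| = 1$, a contradiction. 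Therefore $L = K$, $|S| = 2$, and both places $P_1, P_2$ above infinity are of degree one.

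It remains to show $g(F) = 0$. Let $u$ generate the torsion-free part of $R^{\times}$, with divisor $a(P_1 - P_2)$ for some integer $a \geq 1$. Since every element of $R$ is a sum of units, each of which has the shape $\lambda u^k$ with $\lambda \in K^{\times}$, we obtain $R = K[u, u^{-1}]$. The plan is then to compare two descriptions of the Riemann--Roch space $R_n := \mathcal{L}(nP_1 + nP_2)$. Because the valuations $v_{P_1}(u^k) = ak$ and $v_{P_2}(u^k) = -ak$ take pairwise distinct values as $k$ varies over $\Z$, no valuation-cancellation can occur in a $K$-linear combination, and one reads off $\dim_K R_n = 2\lfloor n/a \rfloor + 1$. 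On the other hand, Riemann--Roch gives $\dim_K R_n = 2n + 1 - g$ for $n$ sufficiently large. Letting $n \to \infty$ forces $a = 1$ and $g = 0$. The main obstacle I anticipate is precisely this no-cancellation step: it is what converts the qualitative hypothesis $u(R) \leq \omega$ into a rigid numerical constraint by pinning down the pole orders of sums of units at $P_1, P_2$ in terms of the extreme exponents that appear.
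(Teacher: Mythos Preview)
The paper under review is a survey and does \emph{not} contain a proof of this theorem; it merely quotes the result from \cite{Frei2010}. There is therefore no ``paper's own proof'' to compare your proposal against.

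That said, your argument is correct and self-contained. The direction $(2)\Rightarrow(1)$ is exactly the standard reduction to $R\cong K[t,t^{-1}]$, and your appeal to the function-field analogue of Theorem~\ref{JN} to rule out $u(R)<\omega$ is appropriate. For $(1)\Rightarrow(2)$, each step checks out: the $S$-unit theorem forces $|S|\geq 2$; the constant-field tower argument correctly eliminates $[L:K]=2$ (the infinite place of $K(x)$ is inert in $L(x)$ since the residue extension is $L/K$); and your Riemann--Roch comparison is sound. The ``no-cancellation'' step you flag is indeed the crux, and it works exactly because the $P_1$-valuations $ak$ of the monomials $u^k$ are pairwise distinct, so the ultrametric inequality is an equality and $\dim_K \mathcal{L}(nP_1+nP_2)=2\lfloor n/a\rfloor+1$. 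Matching this against $2n+1-g$ for large $n$ forces $a=1$ and $g=0$ simultaneously.

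The survey does hint that the original paper \cite{Frei2010} gives two arguments: one via an explicit generator $y$ with $y^2=f(x)$ (the remark following the theorem statement), and an ``alternative proof'' yielding the more general rationality criterion stated immediately after. Your approach is closest in spirit to the latter, working intrinsically with places and Riemann--Roch rather than with an explicit equation for $F$ over $K(x)$.
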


This criterion can also be phrased in terms of an element generating $F$ over $K(x)$. If, for example, $K$ is the full constant field of $F$ and of odd characteristic then we can write $F = K(x,y)$, where $y^2 = f(x)$ for some separable polynomial $f \in K[x]\setminus K$. Then we get $u(R) = \omega$ if and only if $f$ is of degree $2$ and its leading coefficient is a square in $K$ (\cite[Corollary 1]{Frei2010}).

Theorem \ref{qualitative_function_field} holds in fact for arbitrary perfect base fields $K$. An alternative proof given at the end of \cite{Frei2010} implies the following stronger version:

\begin{theorem}
Let $F|K$ be an algebraic function field in one variable over a perfect field $K$. Let $S$ be a set of two places of $F|K$ of degree one, and denote by $R$ the ring of $S$-integers of $F|K$. Then $u(R) = \omega$ if and only if $F | K$ is rational.
\end{theorem}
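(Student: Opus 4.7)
The plan is to use the Dirichlet-type description of $R^*$ for function fields, together with the hypothesis that $R$ is additively generated by its units, to reconstruct $R$ as a Laurent polynomial ring and thereby identify $F$ as a rational function field.

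For the implication ``$F|K$ rational $\Rightarrow u(R) = \omega$'', write $F = K(x)$ with $x$ transcendental. Since $\mathrm{PGL}_2(K)$ acts doubly transitively on $\mathbb{P}^1(K)$, after applying an automorphism of $F$ over $K$ we may assume that $S$ consists of the zero and the pole of $x$, so $R = K[x, x^{-1}]$. Every element is then a $K$-linear combination of monomials $\alpha x^n$, each of which is a unit, giving $u(R) \leq \omega$. Combined with the function field analogue of Theorem \ref{JN} stated earlier, this yields $u(R) = \omega$.

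For the converse, suppose $u(R) = \omega$ and write $S = \{P_1, P_2\}$. Since $P_1$ has degree one, its residue field equals $K$; the full constant field of $F$ embeds into that residue field, hence also equals $K$. Any $f \in R^*$ has $\mathrm{div}(f)$ supported on $S$ and of degree zero, hence of the form $a(P_1 - P_2)$ with $a \in \Z$, which yields an injection $R^*/K^* \hookrightarrow \Z$ whose image is a subgroup $d\Z$. If $d = 0$, then $R^* = K^*$ and every sum of units lies in $K$; but $R$ is a Dedekind domain of Krull dimension one, so strictly contains $K$, contradicting $u(R) = \omega$. Hence $d \geq 1$, and choosing a fundamental $S$-unit $v$ with $\mathrm{div}(v) = d(P_1 - P_2)$ we get $R^* = K^* \cdot v^{\Z}$.

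The hypothesis $u(R) = \omega$ now says that every element of $R$ is a finite $K$-linear combination of integer powers of $v$, i.e.\ $R \subseteq K[v, v^{-1}]$; the reverse inclusion is immediate, so $R = K[v, v^{-1}]$. Since $R$ is not a field, $v$ must be transcendental over $K$, and passing to fraction fields gives $F = K(v)$, which is rational over $K$. The only delicate point in the argument is ruling out the degenerate case $d = 0$, which can occur over a general perfect base field (for instance when $P_1 - P_2$ has infinite order in the Jacobian) but is excluded by the assumption that $R$ is generated by its units.
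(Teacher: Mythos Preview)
The paper is a survey and does not include its own proof of this theorem; it merely states the result and refers to \cite{Frei2010} for the argument. So there is no in-paper proof to compare against.

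That said, your argument is correct and is essentially the natural one. A couple of minor remarks: for the forward direction, the double (in fact triple) transitivity of $\mathrm{PGL}_2(K)$ on $\mathbb{P}^1(K)$ holds over any field, so there is no issue there. For the converse, the only point you leave implicit is that $\mathrm{Frac}(R) = F$, which is the standard fact that the ring of $S$-integers of a function field (for any finite nonempty $S$) has the whole function field as its field of fractions; once that is granted, $R = K[v,v^{-1}]$ immediately gives $F = K(v)$. Your handling of the degenerate case $d = 0$ is fine: $R$ is a Dedekind domain with infinitely many maximal ideals (since $F|K$ has infinitely many places), hence is not a field and strictly contains $K$, so $R^* = K^*$ would contradict $u(R) = \omega$.
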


All of the rings $R$ investigated above have in common that their unit groups are of rank at most one. Currently, there are no known nontrivial criteria for families of number fields (or function fields) whose rings of integers have unit groups of higher rank. We consider it an important direction to find such criteria.

Peth\H{o} and Ziegler investigated a modified version of Problem \ref{qual_problem}, where one asks whether a ring of integers has a power basis consisting of units \cite{Ziegler2010, Petho2011}. For example, Ziegler proved the following:

\begin{theorem}\cite[Theorem 1]{Ziegler2010}\label{ziegler_power_base_quartic}
Let $m>1$ be an integer which is not a square. Then the order $\Z[\sqrt[4]{m}]$ admits a power basis consisting of units if and only if $m=a^4 \pm 1$, for some integer $a$.
\end{theorem}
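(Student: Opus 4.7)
The plan addresses both directions.

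\emph{Sufficiency.} Suppose $m = a^4 \pm 1$ and let $\alpha = \sqrt[4]{m}$. The minimal polynomial of $\alpha - a$ over $\Q$ is $(X+a)^4 - m$, so $N(\alpha - a) = a^4 - m = \mp 1$. Thus $\beta := \alpha - a$ is a unit of $\Z[\alpha]$, and since $\beta \equiv \alpha \pmod{\Z}$ we have $\Z[\beta] = \Z[\alpha]$, giving the required power basis of units.

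\emph{Necessity.} Suppose $\beta \in \Z[\alpha]^\times$ and $\{1, \beta, \beta^2, \beta^3\}$ is a $\Z$-basis of $\Z[\alpha]$; write $\beta = c_0 + c_1 \alpha + c_2 \alpha^2 + c_3 \alpha^3$ with $c_i \in \Z$. The first key step is to compute the index form of the order $\Z[\alpha]$. Labelling the conjugates of $\alpha$ as $\pm \alpha, \pm i\alpha$ and suitably grouping the six factors $(\beta_j - \beta_k)/(\alpha_j - \alpha_k)$ into real products yields the factorisation
\[
I(c_1, c_2, c_3) = (c_1^2 - m c_3^2)\bigl[(c_1^2 - m c_3^2)^2 + 4m(c_1 c_3 - c_2^2)^2\bigr].
\]
Since $m \geq 2$ and the bracketed term is a sum of two non-negative squares, the condition $|I| = 1$ forces
\[
c_1^2 - m c_3^2 = \pm 1 \qquad \text{and} \qquad c_1 c_3 = c_2^2.
\]
The first equation gives $\gcd(c_1, c_3) = 1$, which together with the second forces $c_1 = s^2$, $c_3 = t^2$, $c_2 = \delta s t$ for coprime non-negative integers $s, t$ and $\delta \in \{\pm 1\}$; substituting back yields the quartic Thue equation $s^4 - m t^4 = \pm 1$.

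If $t = 0$ then $s = 1$, $c_2 = c_3 = 0$, and $\beta = c_0 + \alpha$. The unit condition now reads $c_0^4 - m = \pm 1$, immediately giving $m = c_0^4 \mp 1$, of the claimed form. The substantive case is $t \neq 0$: here $s + t\alpha$ is itself a unit of norm $s^4 - m t^4 = \pm 1$, and the condition $N(\beta) = \pm 1$ descends through the intermediate field $\Q(\sqrt{m}) = \Q(\alpha^2)$ to the Pell-type equation
\[
u^2 - m v^2 = \pm 1, \qquad u = c_0^2 - m s^2 t^2, \qquad v = s^4 + m t^4 - 2 \delta c_0 s t.
\]
When $|t| = 1$ the Thue equation itself gives $m = s^4 \mp 1$, again of the desired form. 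The main obstacle is to exclude $|t| \geq 2$: here I would combine the rigidity of $v$ as a degree-one polynomial in $c_0$ with the Pell constraint (which pins $u/v$ very close to $\pm \sqrt{m}$) to derive a contradiction. Failing a purely elementary approach, one invokes Ljunggren-type results on the quartic Thue equation $X^4 - D Y^4 = \pm 1$ to bound the possible sporadic $(s, t)$, then verifies directly that none of them extends to an integer solution in $c_0$.
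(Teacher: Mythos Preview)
The present paper is a survey and does not contain a proof of this theorem; the result is simply quoted from Ziegler's 2010 article. There is therefore no ``paper's own proof'' against which to compare your attempt.

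On the merits of the proposal itself: the sufficiency direction is correct, and your computation of the index form of $\Z[\sqrt[4]{m}]$ together with the reduction to the system $s^4 - m t^4 = \pm 1$, $c_1 c_3 = c_2^2$ is sound. The cases $t=0$ and $t=1$ are disposed of cleanly. However, the case $|t|\ge 2$ is not actually proved: you write ``I would combine\ldots'' and ``failing a purely elementary approach, one invokes Ljunggren-type results\ldots\ then verifies directly,'' which is a programme rather than an argument. In Ziegler's original treatment this is precisely the substantial part, and it does require genuine input on quartic Thue equations (equivalently, on the unit group structure of $\Z[\sqrt[4]{m}]$, which has rank~$2$). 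Until that step is carried out, your necessity direction remains incomplete.
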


Since analogous results are already known for negative $m$ \cite{Ziegler2008} and for the rings $\Z[\sqrt[d]{m}]$, $d<4$ \cite{Belcher1974, Tichy2007}, Theorem \ref{ziegler_power_base_quartic} motivates the following conjecture:

\begin{conjecture}\cite[Conjecture 1]{Ziegler2010}
Let $d \geq 2$ be an integer and $m \in \Z \setminus \{0\}$, and assume that $\sqrt[d]{m}$ is an algebraic number of degree $d$. Then $\Z[\sqrt[d]{m}]$ admits a power basis consisting of units if and only if $m = a^d \pm 1$, for some integer $a$.
\end{conjecture}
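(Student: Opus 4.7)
Let $\alpha = \sqrt[d]{m}$ and $R = \Z[\alpha]$. The easy direction is by explicit construction: if $m = a^d + \varepsilon$ with $\varepsilon \in \{\pm 1\}$, then
$$\alpha^d - a^d = (\alpha - a)\bigl(\alpha^{d-1} + a\alpha^{d-2} + \cdots + a^{d-1}\bigr) = \varepsilon$$
shows that $\theta := \alpha - a$ is a unit of $R$. Since $\theta - \alpha \in \Z$, the transition matrix from $(1,\theta,\ldots,\theta^{d-1})$ to $(1,\alpha,\ldots,\alpha^{d-1})$ is unitriangular with ones on the diagonal, hence $\{1,\theta,\ldots,\theta^{d-1}\}$ is a power basis of $R$ consisting of units.

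For the hard direction, suppose $\theta \in R$ is a unit whose consecutive powers form a $\Z$-basis of $R$, and write $\theta = c_0 + c_1\alpha + \cdots + c_{d-1}\alpha^{d-1}$ with $c_i \in \Z$. My plan is to show that necessarily $c_2 = \cdots = c_{d-1} = 0$ and $c_1 = \pm 1$; indeed, a direct resultant calculation gives
$$N_{K/\Q}(c_0 + c_1\alpha) = c_0^d + (-1)^{d+1} c_1^d m,$$
and setting this equal to $\pm 1$ with $c_1 = \pm 1$ yields $m = a^d \pm 1$ for a suitable integer $a$. Two polynomial conditions on $(c_0,\ldots,c_{d-1},m)$ are available to drive the reduction: the norm equation $N(\theta) = \pm 1$, and the index-form equation $I_d(c_1,\ldots,c_{d-1}) = \pm 1$, where $I_d$ is the determinant of the transition matrix above, a homogeneous polynomial of degree $d(d-1)/2$ in $c_1,\ldots,c_{d-1}$ (independent of $c_0$). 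For $d = 3, 4$ the combined system is solved completely in the works of Belcher, Tichy--Ziegler and Filipin--Tichy--Ziegler by reducing it to explicit families of Thue equations.

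The main obstacle is handling these two equations simultaneously when $d$ grows, since the degree of $I_d$ is quadratic in $d$ and the number of variables is linear in $d$, putting the direct case-by-case approach well beyond current algorithmic reach. A more conceptual route, which seems the most promising, is to work in the Galois closure $L$ of $K$: the conjugates of $\alpha$ are $\zeta^i\alpha$ with $\zeta = e^{2\pi i/d}$, all of equal absolute value $|m|^{1/d}$, while the conjugates $\theta^{(i)}$ of $\theta$ are the corresponding polynomials in $\zeta^i\alpha$; the unit condition $\prod_i |\theta^{(i)}| = 1$ then forces the contributions of the higher $c_j \zeta^{ij} \alpha^j$ to cancel very precisely. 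Turning such cancellations into a structural rigidity statement on $\theta$ would presumably go through $S$-unit equation bounds \`a la Evertse--Gy\H{o}ry applied to the ratios $\theta^{(i)}/\theta^{(j)}$ in $L$, combined with Archimedean size estimates to rule out $(c_2,\ldots,c_{d-1}) \ne 0$. Carrying this out uniformly in $d$, i.e.\ converting the smallness constraint on a large number of conjugate units into a forcing of the linear shape $\theta = c_0 \pm \alpha$, is precisely the step that resists proof and is, I suspect, the reason the statement remains a conjecture.
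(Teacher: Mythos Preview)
The statement you are attempting is explicitly a \emph{Conjecture} in the paper (attributed to Ziegler), and the paper offers no proof of it; there is therefore nothing to compare your argument against. Your treatment of the easy direction is correct and standard: when $m=a^d\pm 1$, the element $\theta=\alpha-a$ is a unit and the unitriangular change of basis shows $\{1,\theta,\ldots,\theta^{d-1}\}$ is a $\Z$-basis of $\Z[\alpha]$.

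For the converse you do not actually give a proof, and you are right not to claim one: your write-up is an honest outline of why the problem is hard rather than an argument. The reduction to the pair of equations $N(\theta)=\pm 1$ and $I_d(c_1,\ldots,c_{d-1})=\pm 1$ is correct, and your remark that the cases $d\le 4$ are settled in the literature matches what the paper records (Belcher, Tichy--Ziegler, Ziegler, Filipin--Tichy--Ziegler). But the proposed ``conceptual route'' via $S$-unit equations for the ratios $\theta^{(i)}/\theta^{(j)}$ is only a heuristic: the Evertse--Gy\H{o}ry bounds control the \emph{number} of solutions of a fixed unit equation, not the shape of a single unit, and the Archimedean estimates you allude to would have to be made effective and uniform in $d$ to force $c_2=\cdots=c_{d-1}=0$. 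As you yourself conclude, this step is exactly what is open, so your proposal should be read as a correct proof of one implication together with a reasonable discussion of the obstacles to the other, not as a proof of the conjecture.
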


For rings $R$ with $u(R) = \omega$, Ashrafi \cite{Ashrafi2009} investigated the stronger property that every element of $R$ can be written as a sum of $k$ units for all sufficiently large integers $k$. Ashrafi proved that this is the case if and only if $R$ does not have $\Z/2\Z$ as a factor, and applied this result to rings of integers of quadratic and complex cubic number fields.

Let $R$ be an order in a quadratic number field. Ziegler \cite{Ziegler2011} found various results about representations of elements of $R$ as sums of $S$-units in $R$, where $S$ is a finite set of places containing all Archimedean places. 

Another variant of Problem \ref{qual_problem} asks for representations of algebraic integers as sums of distinct units. Jacobson \cite{Jacobson1964} proved that in the rings of integers of the number fields $\Q(\sqrt{2})$ and $\Q(\sqrt{5})$, every element is a sum of distinct units. His conjecture that these are the only quadratic number fields with that property was proved by {\'S}liwa \cite{Sliwa1974}. Belcher \cite{Belcher1974, Belcher1975} investigated cubic and quartic number fields. A recent article by Thuswaldner and Ziegler \cite{Thuswaldner2011} puts these results into a more general framework: they apply methods from the theory of arithmetic dynamical systems to additive unit representations.

\section{The extension problem}

\begin{problem}\cite[Problem B]{Jarden2007}\label{extension_problem}
Is it true that each number field has a finite extension $L$ such that the ring of integers of $L$ is generated by its units?
\end{problem}

If $K$ is an Abelian number field, that is, $K|\Q$ is a Galois extension with Abelian Galois group, then we know by the Kronecker-Weber theorem that $K$ is contained in a cyclotomic number field $\Q(\zeta)$, where $\zeta$ is a primitive root of unity. The ring of integers of $\Q(\zeta)$ is $\Z[\zeta]$, which is obviously generated by its units. Problem \ref{extension_problem} was completely solved by Frei \cite{Frei2011B}:

\begin{theorem}\cite[Theorem 1]{Frei2011B}\label{extension_problem_theorem}
For any number field $K$, there exists a number field $L$ containing $K$, such that the ring of integers of $L$ is generated by its units.
\end{theorem}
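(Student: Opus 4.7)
The plan is to extend the Kronecker--Weber-style argument that handles abelian $K$ -- take $L = \Q(\zeta_n)$ whenever $K \subseteq \Q(\zeta_n)$, so that $\Z[\zeta_n]$ is generated by the unit $\zeta_n$ -- to arbitrary number fields, manufacturing the needed units by hand.

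\emph{Key construction.} For any algebraic integer $\alpha \in \mathcal{O}_K$, the polynomial $X^2 - \alpha X + 1$ is monic with constant term $1$. A root $\beta$ satisfies $\beta(\alpha - \beta) = 1$, so $\beta$ and $\alpha - \beta$ are both units in $\mathcal{O}_{K(\beta)}$, and
\[
\alpha \;=\; \beta + (\alpha - \beta)
\]
exhibits $\alpha$ as a sum of two units in an at most quadratic extension of $K$.

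\emph{Application to an integral basis.} Fix an integral basis $\omega_1, \ldots, \omega_n$ of $\mathcal{O}_K$ and apply the key construction to each $\omega_i$, producing units $\beta_i$ with $\beta_i^2 - \omega_i \beta_i + 1 = 0$. Setting $L_0 := K(\beta_1, \ldots, \beta_n)$, each $\omega_i$ becomes a sum of two units of $\mathcal{O}_{L_0}$, while each $\beta_i$ is itself a unit; hence the order $\mathcal{O}_K[\beta_1, \ldots, \beta_n]$ is contained in $\Z[\mathcal{O}_{L_0}^\times]$, and in particular $\mathcal{O}_K \subseteq \Z[\mathcal{O}_{L_0}^\times]$.

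\emph{Main obstacle.} The hard step is to upgrade the inclusion $\mathcal{O}_K \subseteq \Z[\mathcal{O}_{L_0}^\times]$ to an equality $\mathcal{O}_L = \Z[\mathcal{O}_L^\times]$ for some finite $L \supseteq L_0$. In general $\Z[\mathcal{O}_{L_0}^\times]$ is only a suborder of $\mathcal{O}_{L_0}$: enlarging the field creates new algebraic integers at the same time that it creates new units, so a naive iteration of the construction need not terminate. I expect the argument to bypass this by exploiting the freedom to replace each $\omega_i$ by $\omega_i + c_i$ with $c_i \in \Z$ (which does not change the $\Z$-span of the basis) so as to force $\mathcal{O}_{L_0}$ to coincide with $\mathcal{O}_K[\beta_1, \ldots, \beta_n]$, or even to be monogenically generated by a single unit. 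Concretely, one should be able to select the $c_i$ via a Hilbert-irreducibility or Chebotarev density argument so that the discriminant of the constructed order matches the discriminant of $\mathcal{O}_{L_0}$ (i.e.\ that no unwanted ramification is introduced). This arithmetic control over discriminants and indices is, in my view, the genuine technical difficulty.
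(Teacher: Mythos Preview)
Your skeleton matches the paper's own description of Frei's proof almost exactly: the paper says the argument proceeds by ``finding elements of the ring of integers of $K$ with certain properties via asymptotic counting arguments, and then using these properties to generate easily manageable quadratic extensions of $K$ in which those elements are sums of units of the respective rings of integers. The field $L$ is then taken as the compositum of all these quadratic extensions.'' Your $X^2-\alpha X+1$ trick is precisely the ``easily manageable quadratic extension'' step, and your $L_0=K(\beta_1,\ldots,\beta_n)$ is the compositum.

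You have also correctly located the genuine difficulty: passing from $\mathcal{O}_K\subseteq\Z[\mathcal{O}_{L_0}^\times]$ to $\mathcal{O}_L=\Z[\mathcal{O}_L^\times]$. This is exactly the step the survey does not spell out, and your proposal is candid that it is not resolved. The paper's summary indicates that Frei handles it not by Hilbert irreducibility or Chebotarev per se, but by \emph{asymptotic counting arguments}: one first selects the elements of $\mathcal{O}_K$ (your translated $\omega_i+c_i$, in effect) so that they already satisfy arithmetic constraints guaranteeing that the resulting quadratic extensions, and their compositum, have controlled rings of integers. So your instinct---exploit the freedom in the choice of generators before adjoining the $\beta_i$---is right, but the mechanism used in the cited paper is a density/counting argument rather than the specific tools you name. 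Since this is a survey and the full proof lives in \cite{Frei2011B}, the details of that counting step are not reproduced here; your proposal should be read as a correct outline with the same gap the survey itself leaves.
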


The proof relies on finding elements of the ring of integers of $K$ with certain properties via asymptotic counting arguments, and then using these properties to generate easily manageable quadratic extensions of $K$ in which those elements are sums of units of the respective rings of integers. The field $L$ is then taken as the compositum of all these quadratic extensions.

Prior to this, with an easier but conceptually similar argument, Frei \cite{Frei2010B} answered the function field version of Problem \ref{extension_problem}:

\begin{theorem}\cite[Theorem 2]{Frei2010B}
Let $F|K$ be an algebraic function field over a perfect field $K$, and $R$ the ring of $S$-integers of $F$, for some finite set $S\neq \emptyset$ of places. Then there exists a finite extension field $F'$ of $F$ such that the integral closure of $R$ in $F'$ is generated by its units.
\end{theorem}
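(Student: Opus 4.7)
The plan is a function-field analogue of the proof of Theorem \ref{extension_problem_theorem}, based on one elementary observation. For any $\alpha \in R$ and any field $F' \supseteq F$ containing a root $\beta$ of the monic polynomial $T^2 - \alpha T + 1 \in R[T]$, both $\beta$ and $\alpha - \beta$ are integral over $R$ and multiply to $1$, so they are units in the integral closure of $R$ in $F'$. Consequently $\alpha = \beta + (\alpha - \beta)$ is a sum of two units there. (In characteristic $2$ this polynomial becomes reducible or inseparable, so one would replace it with a suitable Artin--Schreier variant; I take characteristic different from $2$ for clarity.)

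Since $R$ is the coordinate ring of the affine curve obtained from the smooth projective model of $F|K$ by removing the places in $S$, it is a finitely generated $K$-algebra; fix generators $\alpha_1, \ldots, \alpha_n \in R$. Let $\beta_i$ be a root of $T^2 - \alpha_i T + 1$, set $F' := F(\beta_1, \ldots, \beta_n)$, a finite extension of $F$ of degree at most $2^n$, and let $R'$ denote the integral closure of $R$ in $F'$. The subset $U \subseteq R'$ of elements expressible as sums of units of $R'$ is a subring: it is closed under addition, under negation (since $-1$ is a unit), and under multiplication (products of units are units), and it contains $K$ because $K \setminus \{0\} \subseteq (R')^\times$. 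By construction each $\alpha_i$, each $\beta_i$, and each $\beta_i^{-1} = \alpha_i - \beta_i$ lies in $U$, so the subring $R[\beta_1, \ldots, \beta_n]$ is already contained in $U$.

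The remaining obstacle, which I expect to be the heart of the proof, is that $U$ may still fall short of $R'$: in a quadratic extension $F(\beta_i)$ the integral closure of $R$ can strictly contain $R[\beta_i]$ at primes where the ideal $(\alpha_i^2 - 4)$ has even valuation, mirroring the familiar failure $\Z[\sqrt{5}] \subsetneq \Z[(1+\sqrt{5})/2]$ in the number-field world. I would address this in one of two ways. The first is a strong-approximation argument: perturb each $\alpha_i$ by an element of $K$ so that the ideals $(\alpha_i^2 - 4)$ become squarefree in $R$ and pairwise coprime; then each $R[\beta_i]$ is the integral closure of $R$ in $F(\beta_i)$, and the coprimality of the relative discriminants forces the integral closure in the compositum $F'$ to equal $R[\beta_1, \ldots, \beta_n]$, which lies in $U$. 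The second fallback is iteration: because $R'$ is itself a finitely generated $K$-algebra, one lists a finite set of additional generators of $R'$ over $R[\beta_1, \ldots, \beta_n]$, applies the same construction to each, and argues termination via a decreasing finiteness invariant (for instance, the length of the torsion $R$-module $R'/R[\beta_1, \ldots, \beta_n]$).

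The reason this is ``easier but conceptually similar'' to the number-field proof of Theorem \ref{extension_problem_theorem} is that here $R$ already comes equipped with a finite $K$-algebra generating set, so the asymptotic counting of elements of bounded height carried out in Frei's number-field argument is not required; otherwise the overall strategy — taking a compositum of cleverly chosen quadratic extensions in which specified elements become sums of units — is exactly the same.
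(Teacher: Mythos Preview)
This survey does not itself prove the theorem; it only cites \cite{Frei2010B} and describes the argument as ``easier but conceptually similar'' to the number-field case, which it summarises as choosing elements, passing to quadratic extensions in which those elements become sums of units, and forming the compositum. Your device $T^{2}-\alpha T+1$ and the compositum $F'=F(\beta_{1},\ldots,\beta_{n})$ reproduce exactly this outline, so at the level of detail the paper offers you are on target.

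The gap you flag is real, however, and neither proposed remedy closes it as written. For the perturbation idea, shifting each $\alpha_{i}$ by a constant $c_{i}\in K$ to make $(\alpha_{i}+c_{i})^{2}-4$ squarefree may be impossible when $K$ is finite: the bad choices are $c_{i}=-\alpha_{i}(P)\pm 2$ as $P$ runs over the ramification points of $\alpha_{i}$, a finite list that can nonetheless exhaust a small $K$, and securing pairwise coprime discriminants across all $i$ only compounds the problem. (Over an infinite perfect $K$ this route can indeed be pushed through, but the theorem is stated for arbitrary perfect $K$.) For the iterative idea, the proposed invariant---the length of $R'/R[\beta_{1},\ldots,\beta_{n}]$---is not shown to decrease: the next round produces a quotient $R''/R'[\delta_{1},\ldots,\delta_{m}]$ over a strictly larger ring with new ramification, and there is no evident inequality linking it to the previous one. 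Your sketch is thus strategically faithful to what the paper describes, but the step you yourself identify as the heart of the proof is still open.
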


\section{The quantitative problem}

\begin{problem}\cite[Problem C]{Jarden2007}
Let $K$ be an algebraic number field. Obtain an asymptotic formula for the number $N_k(x)$ of positive rational integers $n \leq x$ which are sums of at most $k$ units of the ring of integers of $K$.
\end{problem}

As Jarden and Narkiewicz noticed, Lemma \ref{JN_lemma} and Szemer\'edi's theorem (see \cite{Gowers2001}) imply that
$$\lim_{x \to \infty} \frac{N_k(x)}{x} = 0\text,$$
for any fixed $k$.

A similar question has been investigated by Filipin, Fuchs, Tichy, and Ziegler \cite{Filipin2008, Filipin2008B, Fuchs2009}. We state here the most general result \cite{Fuchs2009}. Let $R$ be the ring of $S$-integers of a number field $K$, where $S$ is a finite set of places containing all Archimedean places. Two $S$-integers $\alpha$, $\beta$ are \emph{associated}, if there exists a unit $\varepsilon$ of $R$ such that $\alpha = \beta \varepsilon$. For any $\alpha \in R$, we write
$$N(\alpha) := \prod_{\nu \in S} |\alpha|_\nu\text.$$

Fuchs, Tichy and Ziegler investigated the counting function $u_{K, S}(n, x)$, which denotes the number of all classes $[\alpha]$ of associated elements $\alpha$ of $R$ with $N(\alpha) \leq x$ such that $\alpha$ can be written as a sum 
$$\alpha = \sum_{i=1}^n \varepsilon_i\text,$$
where the $\varepsilon_i$ are units of $R$ and no subsum of $\varepsilon_1 + \cdots + \varepsilon_n$ vanishes. The proof uses ideas of Everest \cite{Everest1990}, see also Everest and Shparlinski \cite{Everest1999}.

\begin{theorem}\cite[Theorem 1]{Fuchs2009}
Let $\varepsilon > 0$. Then 
$$u_{K, S}(n, x) = \frac{c_{n-1, s}}{n!}\left(\frac{\omega_K (\log x)^s}{\Reg_{K,S}} \right)^{n-1} + o((\log x)^{(n-1)s - 1 + \varepsilon})\text,$$
as $x \to \infty$. Here, $\omega_K$ is the number of roots of unity of $K$, $\Reg_{K,S}$ is the $S$-regulator of $K$, and $s = |S|-1$. The constant $c_{n, s}$ is the volume of the polyhedron
$$\{(x_{11}, \ldots, x_{ns}) \in \R^{ns} \mid g(x_{11},\ldots, x_{ns})<1\}\text,$$
with
$$g(x_{11}, \ldots, x_{ns}) = \sum_{i=1}^s \max\{0, x_{1i}, \ldots, x_{ni}\} + \max\left\{0, - \sum_{i=1}^s
x_{1i}, \ldots, -\sum_{i=1}^s x_{ni}\right\}\text.$$
\end{theorem}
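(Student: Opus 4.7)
The plan is to count classes $[\alpha]$ by parametrizing them via normalized ordered tuples of units, translating the condition $N(\alpha)\leq x$ via the logarithmic embedding, and reducing to a lattice point count in a fixed polyhedron of $\R^{(n-1)s}$. First, given a class $[\alpha]$ with some representation $\alpha = \varepsilon_1 + \cdots + \varepsilon_n$ satisfying the non-vanishing subsum condition, dividing by any $\varepsilon_{i_0}$ produces an associated element of the form $\beta = 1 + u_2 + \cdots + u_n$ with units $u_j \in R^*$. Counting ordered tuples $(u_2,\ldots,u_n) \in (R^*)^{n-1}$ with $N(\beta)\leq x$ and no vanishing subsum of $1,u_2,\ldots,u_n$ over-counts each class by a factor of $n!$ corresponding to the choice of $i_0$ and the ordering of the remaining units, with the non-generic orbits contributing only to lower-order terms.

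Next, Dirichlet's $S$-unit theorem gives $R^* \cong \mu_K \times \Z^s$, and the logarithmic embedding identifies the free part with a lattice $\Lambda \subset \R^s$ of covolume $\Reg_{K,S}$; the $\omega_K^{n-1}$ torsion choices for the $n-1$ units appear as a multiplicative factor. The crucial analytic step --- where Everest's ideas enter --- is the estimate
$$\log |1 + u_2 + \cdots + u_n|_\nu = \max\{0,\log|u_2|_\nu,\ldots,\log|u_n|_\nu\} + O(1),$$
valid for every $\nu \in S$ whenever no subsum vanishes, plus a similar uniform lower bound. Writing $y_{ij} = \log|u_i|_{\nu_j}$ for $i=2,\ldots,n$ and $j=1,\ldots,s$, so that the remaining coordinate at the $(s+1)$-st place of $S$ is $-\sum_j y_{ij}$ by the product formula, the inequality $\log N(\beta) \leq \log x$ becomes $g(\mathbf{y}) \leq \log x + O(1)$. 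Hence the generic tuples correspond to lattice points of $\Lambda^{n-1}$ in the dilate $(\log x)\cdot \Omega$ of the polyhedron $\Omega = \{g < 1\}$, whose volume is by definition $c_{n-1,s}$.

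A standard Lipschitz-boundary lattice point count (Davenport, with the sharper version by Widmer when applied to $\Omega$) then yields
$$\#\bigl(\Lambda^{n-1} \cap (\log x)\,\Omega\bigr) = \frac{c_{n-1,s}(\log x)^{(n-1)s}}{\Reg_{K,S}^{n-1}} + O\bigl((\log x)^{(n-1)s-1}\bigr),$$
and multiplying by $\omega_K^{n-1}$ and dividing by $n!$ produces the claimed main term. The main obstacle is the error analysis, which splits into two pieces. First, the $O(1)$ fluctuations in the approximation of $\log|\beta|_\nu$ move the boundary of the admissible region by $O(1)$, and a shell estimate against the Lipschitz boundary of $\Omega$ shows that the discrepancy is absorbed in $O((\log x)^{(n-1)s-1})$. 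Second, the tuples violating the non-vanishing-subsum hypothesis must be controlled separately; for this one invokes a quantitative bound on non-degenerate solutions of $S$-unit equations of length at most $n$ (Evertse--Schlickewei--Schmidt), whose cardinality is bounded polynomially in $s$ but with strictly smaller exponent in $\log x$, at the cost of introducing an arbitrarily small $\varepsilon$ into the exponent. Aggregating the two sources of error yields the bound $o((\log x)^{(n-1)s-1+\varepsilon})$ stated in the theorem.
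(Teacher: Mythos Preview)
The paper you are working from is a survey and does \emph{not} contain a proof of this theorem. It merely cites the result as \cite[Theorem 1]{Fuchs2009} and adds the single remark ``The proof uses ideas of Everest \cite{Everest1990}, see also Everest and Shparlinski \cite{Everest1999}.'' There is therefore nothing in the paper to compare your argument against.

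Your sketch is broadly consistent with that one-line description: you normalize by a unit to reduce to sums of the shape $1+u_2+\cdots+u_n$, invoke the Everest-type approximation $\log|1+u_2+\cdots+u_n|_\nu=\max\{0,\log|u_2|_\nu,\ldots,\log|u_n|_\nu\}+O(1)$, and translate the norm condition into a lattice-point count in a dilate of the polyhedron $\{g<1\}$. That is indeed the structure of the argument in the cited paper of Fuchs, Tichy and Ziegler, and the appearance of the factor $\omega_K^{n-1}/(n!\,\Reg_{K,S}^{n-1})$ and of the $\varepsilon$ in the error term is explained correctly. But none of this can be checked against the present paper, which simply quotes the theorem; if the intent is to compare with the original proof, you would have to consult \cite{Fuchs2009} directly.
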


The values of the constant $c_{n,s}$ are known in special cases from \cite{Fuchs2009}:
\begin{center}
\begin{tabular}{  c c c c c c }
 \hline
      & $n$ &   &   &   &    \\
\hline
  $s$ & 1   & 2 & 3 & 4  &  5  \\
\hline
  1   &  2  & 3  & 4  &  5  & 6 \\
  2  &  3  &  15/4  &  7/2  &  45/16  &  \\
  3  &  10/3  &  7/3  &  55/54  &  &  \\
  4  &  35/12  &  275/32  &  &  &  \\
  5  &  21/10  & & & & \\
\hline
\end{tabular}
\end{center}

Furthermore, $c_{n,1}=n+1$ and $c_{1,s}=\frac{1}{s!}\binom{2s}{s}$.

In the following we calculate the constant $c_{n,s}$ for $n>1$ and $s=2$. This constant is the volume of the polyhedron
$$
V = \left\lbrace  ( x,y) \in \mathbb{R}^{n} \times \mathbb{R}^{n} :  g( x,y)<1  \right\rbrace ,
$$
with
$$
g( x,y)=\max_{i} \left\lbrace 0,x_i \right\rbrace +\max_{i} \left\lbrace 0,y_i \right\rbrace+\max_{i} \left\lbrace 0,-x_i-y_i \right\rbrace ,
$$
where $ x=(x_1, \ldots , x_n)$, $ y=(y_1, \ldots , y_n)$.

For any $K$, $L$, $M \in \left\lbrace 1, \ldots , n \right\rbrace $ we consider the sets
$$
V_{K,L,M} = \left\lbrace  ( x,y) \in \mathbb{R}^{2n} : x_i\leq x_K, \; y_i\leq y_L, \; x_M+y_M\leq x_i +y_i, \; g( x,y)<1  \right\rbrace .
$$
Clearly the union of these sets is $V$ and the intersection of any two of them has volume zero. Thus
$$
c_{n,2} = \sum_{K=1}^{n} \sum_{L=1}^{n} \sum_{M=1}^{n} I_{K,L,M} ,
$$
where $I_{K,L,M}$ is the volume of $V_{K,L,M}$. For the values of $I_{K,L,M}$ we distinguish three cases:
\renewcommand{\labelenumi}{(\roman{enumi})}
\begin{enumerate}
 \item $K,L,M$ are pairwise distinct;
 \item exactly two of the indices $K,L,M$ are equal;
 \item $K=L=M$.
\end{enumerate}
The third case is simple. Since $x_i \leq x_K$, $y_i \leq y_K$ implies $x_i+y_i \leq x_K+y_K$ we obtain $x_i+y_i = x_K+y_K$. Thus $V_{K,K,K}$ has volume zero.

We only have to consider the remaining cases (i) and (ii).
Clearly,
$$
c_{n,2} = n(n-1)(n-2)I_{1,2,3} + 3 n(n-1) I_{1,1,2} .
$$

\def\thesubsection{\arabic{section}.\roman{subsection}}

\subsection{Calculation of $I_{1,2,3}$}

This case can only happen if $n \geq 3$. The inequalities $x_3+y_3\leq x_i+y_i$ give us lower bounds for $x_i$ and $y_i$ and we always have the upper bounds $x_i \leq x_1$ and $y_i \leq y_2$. Hence we have
$$
x_3+y_3-x_i \leq y_i \leq y_2
$$
and
$$
x_i \leq x_1.
$$
Note that
$$
g( x,y)=\max \left\lbrace 0,x_1 \right\rbrace +\max \left\lbrace 0,y_2 \right\rbrace+\max \left\lbrace 0,-x_3-y_3 \right\rbrace .
$$
We integrate with respect to the $y_i$'s, $i\neq 2,3$ and obtain
$$
I_{1,2,3}= \idotsint_{\substack{x_3+y_3-x_i \leq y_i \leq y_2\\
 x_i \leq x_1 , \; g( x,y)<1
}}   dx dy      = \idotsint_{\substack{ x_3+y_3\leq x_2+y_2 \\x_3+y_3-y_2 \leq x_i \leq x_1 \\  y_3 \leq y_2 ,\; g( x,y)<1
}}  \prod_{j\neq 2,3} (y_2-x_3-y_3+x_j)   dx dy_2 dy_3 .
$$
Next we integrate over the $x_i$'s, $i \neq 1,2,3$ and obtain

$$
I_{1, 2, 3} = \idotsint_{\substack{x_2,x_3\leq x_1, \; y_3 \leq y_2\\
x_3+y_3 \leq x_2+y_2\\
g( x,y)<1
}}   \dfrac{1}{2^{n-3}}   \left( y_2-x_3-y_3+x_1  \right)^{2n-5}dx_1dx_2dx_3dy_2dy_3.
$$ 

For the values of $g(x,y)$ we consider the following cases depending on the signs of $x_1$, $y_2$ and $-x_3-y_3$:

\begin{center}
\begin{tabular}{ | c | c | c | c | c | }
 \hline
$r$ & $x_1$ & $y_2$ & $-x_3-y_3$ & $g(x,y)$ \\
\hline
1 & $\geq 0$ & $<0$ & $<0$ & $x_1$ \\
2 & $<0$ & $\geq 0$ & $<0$ & $y_2$ \\
3 & $<0$ & $<0$ & $\geq 0$ & $-x_3-y_3$ \\
4 & $\geq0$ & $\geq0$ & $<0$ & $x_1+y_2$ \\
5 & $\geq0$ & $<0$ & $\geq0$ & $x_1-x_3-y_3$ \\
6 & $<0$ & $\geq0$ & $\geq0$ & $y_2-x_3-y_3$ \\
7 & $\geq0$ & $\geq0$ & $\geq0$ & $x_1+y_2-x_3-y_3$ \\
\hline
\end{tabular}
\end{center}

According to the table we split the integral into seven parts:
$$
I_{1,2,3}=\sum_{r=1}^{7} I_{1,2,3}^{(r)}.
$$

One can calculate these integrals with the help of a computer algebra system. We just give the final expressions:
\begin{eqnarray*}
I_{1,2,3}^{(1)}  =I_{1,2,3}^{(2)}  =I_{1,2,3}^{(3)} & =  & \dfrac{2}{n(2n-1)(n-1)2^n},\\
I_{1,2,3}^{(4)}  =I_{1,2,3}^{(5)}  =I_{1,2,3}^{(6)}& = & \dfrac{2}{n(n-1)2^n}, \\
I_{1,2,3}^{(7)} & = & \dfrac{2}{n2^n}.
\end{eqnarray*}

In conclusion we have
$$
I_{1,2,3}= \dfrac{2(n+1)(2n+1)}{n(2n-1)(n-1)2^n}.
$$

\subsection{Calculation of $I_{1,1,2}$}

We proceed in the same way as in the other case. We have the same bounds
$$
x_2+y_2-x_i \leq y_i \leq y_1
$$
and
$$
x_i \leq x_1.
$$
We integrate first with respect to the $y_i$'s and then with respect to the $x_i$'s, $i \neq 1,2$, and obtain
$$
I_{1,1,2}     = \idotsint_{\substack{x_2+y_2-y_1 \leq x_i \leq x_1 \\  y_2 \leq y_1 , \; g( x,y)<1
}}  \prod_{j\neq 1,2} (y_1-x_2-y_2+x_j)   dx dy_1 dy_2 =
$$
$$
=\idotsint_{\substack{x_2\leq x_1 , \; y_2 \leq y_1\\
g( x,y)<1}}   \dfrac{1}{2^{n-2}}   \left( y_1-x_2-y_2+x_1  \right)^{2n-4} dx_1dx_2dy_1dy_2 .
$$
Proceeding as in the previous section we again split the integral into seven parts $I_{1,1,2}^{(r)}$, $r=1, \ldots ,7$, and obtain:

\begin{eqnarray*}
I_{1,1,2}^{(1)}  =I_{1,1,2}^{(2)}  =I_{1,1,2}^{(3)} & =  & \dfrac{1}{n(2n-1)(n-1)2^n},\\
I_{1,1,2}^{(4)}  =I_{1,1,2}^{(5)}  =I_{1,1,2}^{(6)}& = & \dfrac{1}{n(n-1)2^n}, \\
I_{1,1,2}^{(7)} & = & \dfrac{1}{n2^n}.
\end{eqnarray*}

Hence
$$
I_{1,1,2}= \dfrac{(n+1)(2n+1)}{n(2n-1)(n-1)2^n}.
$$
\begin{conclusion} 
The value of $c_{n,2}$ is
$$
\dfrac{(n+1)(2n+1)}{2^n}.
$$
\end{conclusion}

\begin{remark}
The computation of $c_{n,s}$ for $s>2$ seems to be more difficult and might be considered later.
\end{remark}

\def\thesubsection{\arabic{section}.\arabic{subsection}}

\section{Matrix rings}

\subsection{Matrix rings over arbitrary rings}

Let $R$ be any ring with 1. We say that two elements $a,b \in R$ are equivalent ($a\sim b$) if there exist two units $u,v \in R^\times$ such that $b=uav$. V\'amos \cite[Lemma 1]{Vamos2005} already noticed the following simple fact.
\begin{lemma} \label{lemeq}
Let $R$ be a ring and $a,b \in R$. If $a \sim b $ then, for all $k \geq 1$, $a$ is $k$-good if and only if $b$ is $k$-good.
\end{lemma}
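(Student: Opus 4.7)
The plan is a direct verification from the definition, relying only on the fact that a product of units is a unit. First I would note that the relation $\sim$ is symmetric: if $b = uav$ with $u, v \in R^\times$, then $a = u^{-1} b v^{-1}$, and $u^{-1}, v^{-1}$ are also units. Hence it suffices to prove one implication, say that $a$ being $k$-good implies $b$ is $k$-good; the converse then follows by applying the same implication to the pair $(b,a)$.

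For the forward implication, I would write a $k$-good decomposition $a = \varepsilon_1 + \cdots + \varepsilon_k$ with each $\varepsilon_i \in R^\times$. Applying the defining relation $b = uav$ and distributivity (which is valid in any ring, commutative or not) gives
$$b = u a v = u\left(\sum_{i=1}^{k} \varepsilon_i\right) v = \sum_{i=1}^{k} u \varepsilon_i v.$$
Each summand $u \varepsilon_i v$ is a product of three units, so it lies in $R^\times$ with explicit inverse $v^{-1}\varepsilon_i^{-1} u^{-1}$. Thus $b$ is exhibited as a sum of exactly $k$ units of $R$, i.e.\ $b$ is $k$-good.

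There is no genuine obstacle in the argument; the only point that requires minor care is to keep the two-sided multiplication in the correct order, since $R$ is not assumed commutative, so one must multiply by $u$ on the left and $v$ on the right inside the sum rather than collecting factors. With that observed, the proof is essentially a one-line manipulation followed by the symmetry remark above.
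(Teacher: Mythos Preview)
Your argument is correct and is exactly the intended one-line verification: write $b=uav=\sum_i u\varepsilon_i v$ and observe each $u\varepsilon_i v\in R^\times$, then use symmetry of $\sim$ for the converse. The paper does not supply its own proof of this lemma but merely cites it as a simple observation from V\'amos, so there is nothing further to compare.
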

 
We consider the ring $M_n(R)$ of $n \times n$ matrices, with $n \geq 2$, over an arbitrary ring $R$ with 1. As usual $GL_n(R)$ denotes the group of units of $M_n(R)$.

For $a \in R$ the matrix $E_n(a,i,j)$, $i$, $j \in \left\lbrace 1 ,\ldots,n \right\rbrace $, $i \neq j$, is the $n \times n$ matrix with 1 entries on the main diagonal, $a$ as the entry at position $(i,j)$ and 0 elsewhere. We call this kind of matrices \textit{elementary matrices} and denote by $E_n(R)$ the subgroup of $GL_n(R)$ generated by elementary matrices, permutation matrices and $-I$, where $I$ is the identity matrix of $M_n(R)$.

Let us consider a more specific kind of $k$-goodness introduced by V\'amos \cite{Vamos2005}.

\begin{definition}
A square matrix of size $n$ over $R$ is \textit{strongly $k$-good} if it can be written as a sum of $k$ elements of $E_n(R)$. The ring $M_n(R)$ is strongly $k$-good if every element is strongly $k$-good.
\end{definition}

The following lemma is Lemma 1 from \cite{Henriksen1974} and Lemma 5 from \cite{Vamos2005}.

\begin{lemma} \label{lemHen}
Let $R$ be a ring and $n \geq 2$. Then any diagonal matrix in $M_n(R)$ is strongly 2-good.
\end{lemma}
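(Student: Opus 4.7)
My plan is to exhibit $D = \mathrm{diag}(d_1,\ldots,d_n)$ as an explicit sum $A + B$ with $A, B \in E_n(R)$, centred on the signed cyclic-shift matrix $C$ that sends $e_1 \mapsto -e_n$ and $e_j \mapsto e_{j-1}$ for $2 \leq j \leq n$. Writing $D' = \mathrm{diag}(d_1, \ldots, d_{n-1}, 0)$ and $[i,j]$ for the matrix unit with a $1$ in position $(i,j)$ and zeros elsewhere, I would set
$$A \;=\; D' - C, \qquad B \;=\; C + d_n\,[n,n].$$
The equality $A + B = D' + d_n\,[n,n] = D$ is immediate; the substance of the proof is showing that both $A$ and $B$ lie in $E_n(R)$.

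The matrix $C$ itself is a signed permutation (the product of the cyclic permutation $e_j \mapsto e_{j-1 \bmod n}$ with $\mathrm{diag}(-1,1,\ldots,1)$); standard manipulations with the given generators of $E_n(R)$ place every signed permutation in $E_n(R)$, using only the $2\times 2$ identity $E_2(1,1,2)\,E_2(-1,2,1)\,E_2(1,1,2) = \left(\begin{smallmatrix}0 & 1\\-1 & 0\end{smallmatrix}\right)$ applied inside a $2\times 2$ block together with permutation matrices and $-I$.

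The key computations are then two clean factorisations, driven by the easy identities $C^{-1}[n,n] = -[1,n]$ and $C^{-1}D' = \sum_{j=1}^{n-1} d_j\,[j+1,j]$:
$$B \;=\; C\,(I - d_n\,[1,n]) \;=\; C\cdot E_n(-d_n,1,n),$$
$$A \;=\; (-C)\,(I - C^{-1}D') \;=\; (-C)\cdot E_n(-d_1,2,1)\,E_n(-d_2,3,2)\,\cdots\,E_n(-d_{n-1},n,n-1).$$
The second factorisation is valid because, when multiplied in the order written, every cross-term in the product of these sub-diagonal elementary matrices vanishes: one has $[j_1+1,j_1]\,[j_2+1,j_2] = 0$ whenever $j_1 < j_2$.

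The main obstacle is just spotting the right decomposition; once it is written down, the verification is purely formal matrix arithmetic that requires no commutativity of $R$ and no unit-like property of the entries $d_i$, so the argument goes through over an arbitrary ring with unity.
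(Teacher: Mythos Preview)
Your argument is correct. The decomposition $D=(D'-C)+(C+d_n[n,n])$ with the signed cyclic shift $C$ works exactly as you claim: the identity $C^{-1}[n,n]=-[1,n]$ gives $B=C\,E_n(-d_n,1,n)$, and since $C^{-1}D'=\sum_{j=1}^{n-1}d_j[j+1,j]$ is strictly lower triangular with zero diagonal, $I-C^{-1}D'$ factors as the ordered product $\prod_{j=1}^{n-1}E_n(-d_j,j+1,j)$ with all cross terms vanishing. Your remark that $C$ (and hence $-C$) lies in $E_n(R)$ via the $2\times 2$ identity $E_2(1,1,2)E_2(-1,2,1)E_2(1,1,2)=\left(\begin{smallmatrix}0&1\\-1&0\end{smallmatrix}\right)$ combined with a permutation matrix is also fine, and no commutativity of $R$ is used anywhere.

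As for comparison: the paper does not supply its own proof of this lemma but simply attributes it to Henriksen \cite{Henriksen1974} and V\'amos \cite{Vamos2005}. Your construction is precisely the classical Henriksen decomposition (a diagonal matrix split against a cyclic-shift companion-type matrix), so your approach coincides with the one in the cited sources rather than offering a genuinely different route.
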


A ring $R$ is called an \textit{elementary divisor ring} (see \cite{Kaplansky1949}) if every matrix in $M_n(R)$, $n \geq 2$, can be diagonalized. Lemma \ref{lemHen} implies that, in this case, $M_n(R)$ is 2-good. In particular, if any matrix in $M_n(R)$ can be diagonalized using only matrices in $E_n(R)$ then $M_n(R)$ is strongly 2-good.

The following two remarks can be deduced without much effort from the proof of Lemma \ref{lemHen} that is given in \cite{Vamos2005}.

\begin{remark}
If $R$ is an elementary divisor ring and $1 \neq -1$ then the representation of a matrix in $M_n(R)$ as a sum of two units is never unique.

\end{remark}

\begin{remark}
If $R$ is an elementary divisor ring and $1 \neq -1$ then every element of $M_n(R)$ has a representation as a sum of two distinct units.
\end{remark}

As we have already mentioned, Henriksen \cite{Henriksen1974} proved that $M_n(R)$, where $R$ is any ring, is $3$-good.
Henriksen's result was generalized by V\'amos \cite{Vamos2005} to arbitrary dimension:

\begin{theorem}\cite[Theorem 11]{Vamos2005}
Let $R$ be a ring and let $F$ be a free $R$-module of rank $\alpha$, where $\alpha \geq 2$ is a cardinal number. Then the ring of endomorphisms $E$ of $F$ is $3$-good.

If $\alpha$ is finite and $R$ is $2$-good or an elementary divisor ring then $E$ is $2$-good. If $R$ is any one of the rings $\Z[X]$, $K[X, Y]$, $K\langle X, Y\rangle$, where $K$ is a field, then $u(E) = 3$\text. Here $K\langle X, Y\rangle$ is the free associative algebra generated by $X$, $Y$ over $K$.
\end{theorem}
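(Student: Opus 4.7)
My plan is to treat the three claims of the theorem in turn.

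For the general $3$-good claim, the strategy is to reduce to Henriksen's theorem (quoted above) that $M_n(R)$ is $3$-good for any ring $R$ and every $n\geq 2$. When $\alpha$ is finite, $E \cong M_\alpha(R)$ and the result is immediate. When $\alpha$ is infinite, the cardinal identity $\alpha+\alpha=\alpha$ gives $F \cong F\oplus F$ as $R$-modules, which induces a ring isomorphism $E \cong \mathrm{End}_R(F\oplus F) \cong M_2(E)$. Henriksen's theorem applied with the ring $E$ itself as the base then yields that $M_2(E)$, and hence $E$, is $3$-good.

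For the $2$-good claim with finite rank $n$, we have $E\cong M_n(R)$. If $R$ is an elementary divisor ring, every $A\in M_n(R)$ admits a Smith-type factorisation $A=PDQ$ with $P,Q\in GL_n(R)$ and $D$ diagonal, so $A\sim D$ in the sense of Lemma \ref{lemeq}; by Lemma \ref{lemHen} $D$ is $2$-good, hence so is $A$. If instead $R$ is $2$-good, my plan is a direct splitting: given $A=(a_{ij})$, choose units $u_i,v_i\in R^\times$ with $a_{ii}=u_i+v_i$ and write $A=L+U$, where $L$ is lower triangular with diagonal $(u_1,\ldots,u_n)$ and below-diagonal entries $a_{ij}$ ($i>j$), and $U$ is upper triangular with diagonal $(v_1,\ldots,v_n)$ and above-diagonal entries $a_{ij}$ ($i<j$). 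Both $L$ and $U$ are triangular with units on the diagonal and are therefore invertible in $M_n(R)$ by explicit back-substitution (which remains valid when $R$ is non-commutative).

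For the exact values $u(E)=3$ when $R\in\{\Z[X],K[X,Y],K\langle X,Y\rangle\}$, the upper bound is the first claim. For the lower bound it suffices to exhibit, for some $n\geq 2$, a matrix $A\in M_n(R)$ that is not a sum of two units. In the commutative cases, a decomposition $A=U+V$ with $U,V\in GL_n(R)$ forces $\det U,\det V\in R^\times$, which equals $\{\pm 1\}$ for $\Z[X]$ and $K^\times$ for $K[X,Y]$; my plan is to pick $A$ with entries of carefully chosen degree so that the two constraints $\det U\in R^\times$ and $\det(A-U)\in R^\times$ cannot be satisfied simultaneously. For the free algebra $K\langle X,Y\rangle$, where determinants are not available, I would use the abelianisation homomorphism $K\langle X,Y\rangle\to K[X,Y]$: it maps units to units and hence $2$-good matrices to $2$-good matrices, so lifting a non-$2$-good witness from $M_n(K[X,Y])$ back to $M_n(K\langle X,Y\rangle)$ supplies the required example.

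The hard part will be the lower bound in the third claim. Several natural candidate matrices (for instance scalar multiples of $X$, or $\mathrm{diag}(X,1,\ldots,1)$) turn out to be expressible as sums of two units via judicious off-diagonal choices, so the witness must be chosen more subtly; the core of the argument is a careful degree analysis of $\det(A-U)$ as $U$ ranges over $GL_n(R)$, which must rule out every possible $U$ uniformly.
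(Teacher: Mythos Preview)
The paper is a survey and does not give its own proof of this theorem; it simply quotes V\'amos. What the paper \emph{does} provide, immediately after the statement, is the tool V\'amos used for the lower bound in the third claim: Proposition~\ref{propVam}. Your treatment of the first two claims (Henriksen for finite rank, the isomorphism $E\cong M_2(E)$ for infinite rank, diagonalisation plus Lemma~\ref{lemHen} for elementary divisor rings, and the triangular splitting when $R$ is $2$-good) is correct and is essentially the standard route.

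The genuine gap is in the third claim. You do not produce a witness matrix, and the proposed ``degree analysis of $\det(A-U)$ as $U$ ranges over $GL_n(R)$'' is left entirely open; as you yourself note, naive candidates fail, and you give no indication of how the analysis would be organised. This is unnecessary: the paper hands you Proposition~\ref{propVam}, which replaces the determinant argument by a purely ideal-theoretic one. For $n=2$ one only needs a left ideal $L=Ra_1+Ra_2$ that is not principal and contains no nonzero sum of two units. In $\Z[X]$ take $L=(3,X)$: the sums of two units are $\{0,\pm 2\}$, and $\pm 2\notin L$. In $K[X,Y]$ take $L=(X,Y)$: the sums of two units lie in $K$, and $L\cap K=\{0\}$. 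In $K\langle X,Y\rangle$ take the left ideal $L=RX+RY$: the units are $K^\times$, so again sums of two units lie in $K$, while every element of $L$ has zero constant term; non-principality follows from a short degree argument in the free algebra. In each case the matrix $\left(\begin{smallmatrix}a_1&0\\a_2&0\end{smallmatrix}\right)$ is not $2$-good, so $u(M_2(R))=3$. Your abelianisation idea for $K\langle X,Y\rangle$ is correct in spirit, but with Proposition~\ref{propVam} it is not needed, since that proposition is already formulated for left ideals in arbitrary (non-commutative) rings.
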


To prove that a matrix ring over a certain ring has unit sum number 3, V\'amos used the following proposition.

\begin{proposition}\cite[Proposition 10]{Vamos2005} \label{propVam}
Let $R$ be a ring, $n\geq 2$ an integer and let $L=Ra_1+\cdots +Ra_n$ be the left ideal generated by the elements $a_1, \ldots , a_n \in R$. Let $A$ be the $n \times n$ matrix whose entries are all zero except for the first column which is $\left( a_1, \ldots , a_n \right)^T $. Suppose that
\begin{enumerate}[1.]
 \item $L$ cannot be generated by fewer than $n$ elements, and
\item zero is the only 2-good element in $L$.
\end{enumerate}
Then $A$ is not 2-good.
\end{proposition}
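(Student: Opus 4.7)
The plan is to proceed by contradiction. Suppose that $A=U+V$ with $U,V\in GL_n(R)$, and \emph{normalize} this decomposition by multiplying through on the left by $U^{-1}$: setting $B:=U^{-1}A$, the equation becomes $B=I+U^{-1}V$, so that $U^{-1}V=B-I$ must again be a unit of $M_n(R)$. Since only the first column of $A$ is (possibly) nonzero, the same holds for $B$; write its first column as $(b_1,\ldots,b_n)^T$. Because $U$ and $U^{-1}$ have entries in $R$, the identities $b_i=\sum_j(U^{-1})_{ij}a_j$ and $a_i=\sum_j U_{ij}b_j$ yield immediately
$$ L=Rb_1+Rb_2+\cdots+Rb_n, $$
so the bookkeeping of the ideal $L$ is preserved by the normalization.

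The heart of the argument is the shape of $B-I$: in block form it reads
$$ B-I=\begin{pmatrix} b_1-1 & \mathbf{0}^T \\ \mathbf{c} & -I_{n-1}\end{pmatrix}, $$
where $\mathbf{c}=(b_2,\ldots,b_n)^T$. A direct block-matrix computation, valid over any ring with $1$, shows that such a matrix is a unit of $M_n(R)$ if and only if its $(1,1)$-entry $b_1-1$ is a unit of $R$. Sufficiency is witnessed by the explicit inverse whose $(1,1)$-block is $(b_1-1)^{-1}$ and whose $(2,1)$-block is $\mathbf{c}(b_1-1)^{-1}$; necessity follows by writing a hypothetical inverse $N$ in block form and comparing entries on both sides of $(B-I)N=I=N(B-I)$, which quickly forces $b_1-1$ to have both a left and a right inverse in $R$.

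From $b_1-1\in R^\times$ it follows that $b_1=1+(b_1-1)$ is $2$-good in $R$, and by construction $b_1\in L$; hypothesis~(2) therefore forces $b_1=0$. But then $L=Rb_2+\cdots+Rb_n$ is generated by $n-1$ elements, contradicting hypothesis~(1). The principal technical obstacle is the block-invertibility verification in the non-commutative setting, but this is a short explicit calculation. Once the normalization $A\mapsto U^{-1}A$ and this lemma are in place, the two hypotheses on $L$ are each used exactly once and the contradiction is immediate.
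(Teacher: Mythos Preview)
The paper does not actually supply a proof of this proposition; it merely quotes the statement from V\'amos \cite{Vamos2005} and uses it later. Your argument is correct and self-contained: the normalization $A\mapsto U^{-1}A$ reduces to analyzing the block-triangular matrix $B-I$, and your verification that invertibility of $B-I$ forces $b_1-1\in R^\times$ (using both $MN=I$ and $NM=I$ to obtain a two-sided inverse in the noncommutative setting) is exactly what is needed. This is essentially the argument V\'amos gives, so there is nothing to add.
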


We now apply Lemma \ref{lemHen} to a special case.
Let $R$ be a ring and suppose there exists a function
$$
f: R\setminus \{ 0 \} \rightarrow \mathbb{Z}_{\geq0},
$$
with the following property: for every $a,b \in R$, $b \neq 0$, there exist $q_1,q_2,r_1,r_2 \in R$ such that
\begin{eqnarray*}
 a=q_1b+r_1, & \mbox{ where } & r_1=0 \mbox{ or } f(r_1) < f(b) , \\
a=bq_2+r_2, & \mbox{ where } & r_2=0 \mbox{ or } f(r_2) < f(b) .
\end{eqnarray*}
Then we say that $R$ has \emph{left and right Euclidean division}.

The next theorem is a generalization of the well known fact that every square matrix over a Euclidean domain is diagonalizable. The proof strictly follows the line of the one in the commutative case (see Section 3.5 of \cite{Goodman1998}), hence it is omitted.

\begin{theorem}
Let $R$ be a ring with left and right Euclidean division and $n\geq 2$. For every $A\in M_n(R)$ there exist two matrices $U,V \in E_n(R)$ such that
$$
UAV=D,
$$
where $D$ is a diagonal matrix.
\end{theorem}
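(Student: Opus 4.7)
The plan is to mimic the classical Euclidean-style reduction to block-diagonal form, taking care that each step is realised by an element of $E_n(R)$ and that left versus right division is applied on the correct side. Recall that if $P=E_n(c,i,j)$, then left multiplication $PA$ performs the row operation ``add $c$ times row $j$ to row $i$'', while right multiplication $AP$ performs the column operation ``add (column $i$)$\cdot c$ to column $j$'', with $c$ appearing on the right. Permutation matrices and $-I$, which also lie in $E_n(R)$, realise row/column swaps and sign flips. Thus any finite sequence of such operations corresponds to a factorisation $UAV$ with $U,V\in E_n(R)$.

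First I would pick a nonzero entry of $A$ of minimal $f$-value and permute it to position $(1,1)$; call it $a$. For each $i>1$, apply left Euclidean division to write $a_{i1}=q_i a+r_i$ and apply $E_n(-q_i,i,1)$ on the left; this replaces $a_{i1}$ by $r_i$. If some $r_i\neq 0$, then $f(r_i)<f(a)$, so swap rows $1$ and $i$ and restart, a process that terminates because $f$ takes values in $\mathbb{Z}_{\geq 0}$. Once column $1$ is zero below the pivot, for each $j>1$ use right Euclidean division to write $a_{1j}=a q_j'+r_j'$ and apply $E_n(-q_j',1,j)$ on the right, replacing $a_{1j}$ by $r_j'$. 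Because the only nonzero entry of column $1$ now sits in row $1$, this column operation modifies only row $1$, leaving the freshly produced zeros in column $1$ and all entries of the lower-right $(n-1)\times(n-1)$ block untouched. If some $r_j'\neq 0$, swap columns $1$ and $j$ and return to the column-clearing loop. Every restart of the outer loop strictly decreases $f(a_{11})$, so after finitely many iterations the first row and first column both take the shape $(a',0,\dots,0)$.

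At this point the transformed matrix is block-diagonal with blocks $a'$ and some $A'\in M_{n-1}(R)$, and I would finish by induction on $n$: the case $n=2$ is already done, and for $n>2$ the inductive hypothesis supplies $U',V'\in E_{n-1}(R)$ diagonalising $A'$, which embed into $E_n(R)$ by acting as the identity on row and column $1$. The main obstacle, small but worth isolating, is to check that in the non-commutative setting the column-clearing phase does not undo the row-clearing phase; the decisive observation is that right multiplication by $E_n(-q,1,j)$ sends $a_{kj}$ to $a_{kj}-a_{k1}q$, which is trivial whenever $a_{k1}=0$, so the row-operation direction and the column-operation direction truly commute on the already cleared zeros. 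Once this compatibility is in place, termination follows from the well-ordering of $f(R\setminus\{0\})\subseteq\mathbb{Z}_{\geq 0}$, and the induction on $n$ closes the proof.
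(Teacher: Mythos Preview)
Your proof is correct and is exactly the standard reduction the paper intends: the paper in fact omits the proof, remarking only that it ``strictly follows the line of the one in the commutative case'', and your version---left division for row reductions, right division for column reductions, with the outer loop terminating because $f(a_{11})$ strictly decreases at each swap---is precisely that argument. One small clarification worth making explicit: after a column swap the new first column need not be zero below the pivot, so ``return to the column-clearing loop'' must mean returning to clear column~1 via row operations (as your reference to the ``outer loop'' shows you intend), not merely continuing with column operations on row~1.
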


\begin{corollary}
Let $R$ be a ring with left and right Euclidean division and $n\geq 2$. Then $M_n(R)$ is strongly 2-good.
\end{corollary}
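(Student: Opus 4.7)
The plan is to chain together the preceding theorem and Lemma~\ref{lemHen}, observing that the set $E_n(R)$ is a \emph{group}, so conjugation by its elements preserves membership in $E_n(R)$. More precisely, given any $A \in M_n(R)$, the theorem supplies $U, V \in E_n(R)$ with $UAV = D$ diagonal. Since $E_n(R)$ is generated by elementary matrices, permutation matrices and $-I$, each of which has an inverse again in $E_n(R)$, we have $U^{-1}, V^{-1} \in E_n(R)$.

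Next I would invoke Lemma~\ref{lemHen}, which gives a decomposition $D = F_1 + F_2$ with $F_1, F_2 \in E_n(R)$. Substituting back,
$$
A \;=\; U^{-1} D V^{-1} \;=\; U^{-1} F_1 V^{-1} \;+\; U^{-1} F_2 V^{-1}.
$$
Because $E_n(R)$ is closed under multiplication, both summands lie in $E_n(R)$, so $A$ is strongly $2$-good by definition. Since $A$ was arbitrary, $M_n(R)$ is strongly $2$-good.

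There is essentially no obstacle: the only point requiring a moment's thought is that the equivalence lemma (Lemma~\ref{lemeq}) is phrased for $k$-goodness rather than \emph{strong} $k$-goodness, so one must argue directly via closure of $E_n(R)$ under inverses and products rather than quote Lemma~\ref{lemeq} as a black box. Once this is noted, the corollary is immediate.
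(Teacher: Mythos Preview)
Your proof is correct and matches the paper's approach exactly: the paper derives the corollary immediately from the preceding diagonalization theorem together with Lemma~\ref{lemHen}, having already remarked that ``if any matrix in $M_n(R)$ can be diagonalized using only matrices in $E_n(R)$ then $M_n(R)$ is strongly $2$-good.'' Your observation that one must use closure of the group $E_n(R)$ directly (rather than Lemma~\ref{lemeq}, which is stated only for ordinary $k$-goodness) is exactly the right way to make this precise.
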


We apply the previous result to the special case of quaternions.
Consider the quaternion algebra
$$
Q=\left\lbrace  a+bi+cj+dk \mid a \text{, } b\text{, } c \text{, } d \in \mathbb{Q} \text{, } i^2=-1\text{, } j^2=-1 \text{, }  k=ij=-ji \right\rbrace \text.
$$

\begin{definition}
The ring of \textit{Hurwitz quaternions} is defined as the set
$$
H= \left\lbrace  a+bi+cj+dk \in Q \ \  \mbox{    s. t.    } \ \ a,b,c,d \in \mathbb{Z} \ \ \mbox{  or  }  \ \ a,b,c,d \in \mathbb{Z}+\frac{1}{2}  \right\rbrace .
$$
\end{definition}

For basic properties about Hurwitz quaternions see \cite[Chapter 5]{Conway2003}.

In $Q$ the ring of Hurwitz quaternions plays a similar role as maximal orders in number fields.

The units of $H$ are the 24 elements $\pm 1$, $\pm i$, $\pm j$, $\pm k$ and $(\pm 1 \pm i \pm j \pm k)/2 $, so $u(H)=\omega$.

It is well known that $H$ has left and right Euclidean division. Therefore, we get the following corollary.

\begin{corollary}
 For $n \geq 2$, $M_n(H)$ is strongly 2-good.
\end{corollary}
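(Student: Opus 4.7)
The plan is to reduce the statement directly to the preceding corollary, which asserts that $M_n(R)$ is strongly $2$-good whenever $R$ admits left and right Euclidean division. So the only real task is to verify that $H$ is such a ring, and then to invoke that corollary.

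First I would introduce the norm map $N\colon H \to \Z_{\geq 0}$ defined by
$$N(a+bi+cj+dk) = a^2+b^2+c^2+d^2,$$
which takes integer values on $H$, is multiplicative, and satisfies $N(x)=0$ iff $x=0$. This is the candidate Euclidean function.

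Next I would establish the key geometric fact underlying Hurwitz's classical argument: for every $r \in Q$ there exists $h \in H$ with $N(r-h) < 1$. The point is that the Lipschitz lattice $\Z\langle 1,i,j,k\rangle$ has covering radius $1$ (attained at the center of a unit hypercube), but adjoining the half-integer element $(1+i+j+k)/2$ shrinks the covering radius strictly below $1$; a direct computation shows the maximal distance from a point of $Q$ to $H$ is $\sqrt{1/2}<1$. Given this, for any $a,b \in H$ with $b \neq 0$, the element $ab^{-1} \in Q$ can be approximated by some $q \in H$ with $N(ab^{-1}-q)<1$, and multiplying on the right by $b$ yields $r = a-qb$ with $N(r) < N(b)$. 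The same argument carried out with $b^{-1}a$ in place of $ab^{-1}$ furnishes left Euclidean division. Thus $H$ has left and right Euclidean division in the sense defined above.

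Finally, since $n\geq 2$, the previous corollary applied to $R=H$ immediately yields that $M_n(H)$ is strongly $2$-good, completing the proof. The only real content here is the covering-radius bound for the Hurwitz lattice, which I would either cite from \cite{Conway2003} or quickly verify; the rest is a formal consequence of the general machinery already developed in the paper.
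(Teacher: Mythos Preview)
Your proposal is correct and follows exactly the approach the paper takes: the paper simply remarks that it is well known that $H$ has left and right Euclidean division and then deduces the corollary from the preceding one, so your argument just fills in the standard Hurwitz covering-radius justification that the paper leaves implicit (citing \cite{Conway2003}).
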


\subsection{Matrix rings over Dedekind domains}

Let $R$ be a ring and $A$ an $r \times c$ matrix. The \textit{type} of $A$ is the pair $(r,c)$ and the \textit{size} of $A$ is $\max(r,c)$.
Let $A_1$ and $A_2$ be matrices of type $(r_1,c_1)$ and $(r_2,c_2)$, respectively. The \textit{block diagonal sum} of $A_1$ and $A_2$ is the block diagonal matrix
$$
diag(A_1,A_2)= \left[ 
\begin{array}{cc}
 A_1 & 0     \\
 0   & A_2
\end{array}
\right] ,
$$
of type $(r_1+r_2,c_1+c_2)$. A matrix of positive size is \textit{indecomposable} if it is not equivalent to the block diagonal sum of two matrices of positive size.

In 1972 Levy \cite{Levy1972} proved that, for a Dedekind domain $R$, the class number, when it is finite, is an upper bound to the number of rows and columns in every indecomposable matrix over $R$. V\'amos and Wiegand \cite{Vamos2010} generalized Levy's result to Pr\"{u}fer domains (under some technical conditions) and applied it to the unit sum problem.

\begin{theorem}(see \cite[Theorem 4.7]{Vamos2010})
 Let $R$ be a Dedekind domain with finite class number $c$. For every $n \geq 2c$, $M_n(R)$ is 2-good.
\end{theorem}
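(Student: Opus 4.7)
My proof plan would go as follows. The overall strategy is to combine the invariance of $k$-goodness under the equivalence $\sim$ with the block decomposition provided by Levy's theorem. By Lemma \ref{lemeq}, it suffices to prove that every matrix $A \in M_n(R)$ is equivalent to a $2$-good matrix; I will produce such an equivalent matrix in block-diagonal form whose blocks are individually $2$-good, which forces the whole to be $2$-good by summing the representations block by block.

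Levy's theorem, as used by V\'amos and Wiegand, asserts that any $A \in M_n(R)$ is equivalent to a block diagonal sum $\mathrm{diag}(B_1, \ldots, B_m)$, where each $B_i$ is an indecomposable $r_i \times s_i$ matrix with $\max(r_i, s_i) \leq c$; since the ambient matrix is $n \times n$, one has $\sum r_i = \sum s_i = n$. I would use the hypothesis $n \geq 2c$ to regroup these blocks into square super-blocks, each of size at most $2c$, obtaining an equivalent matrix $\mathrm{diag}(C_1, \ldots, C_\ell)$ with every $C_j$ square. The condition $n \geq 2c$ is precisely what guarantees such a regrouping is always feasible, because any non-square block can be absorbed into neighbouring blocks so that the row and column counts balance, using the fact that the totals already coincide.

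The next step is to show that each square super-block $C_j$ is $2$-good. For this I would exploit the fact that, over a Dedekind domain, a square matrix whose size is large enough to contain a trivial ideal class representative is equivalent to a diagonal matrix. Combined with Lemma \ref{lemHen}, which asserts that diagonal matrices in $M_k(R)$ are strongly $2$-good for every $k \geq 2$, this yields that each $C_j$ is $2$-good, and therefore so is $\mathrm{diag}(C_1, \ldots, C_\ell)$ and, by Lemma \ref{lemeq}, so is $A$.

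The main obstacle lies in the regrouping-plus-diagonalization step. The class group of $R$ is precisely the obstruction to unrestricted diagonalization of matrices over a Dedekind domain; Levy's bound shows that this obstruction is confined to indecomposable blocks of size at most $c$, and the hypothesis $n \geq 2c$ supplies just enough room to pair up these class obstructions so that each super-block has trivial class invariant and hence diagonalizes. Making this pairing explicit, and checking that it can always be carried out within square blocks (rather than merely in a rectangular block-diagonal sense), is the technical core of the argument, and it essentially reproduces the V\'amos--Wiegand refinement of Levy's theorem.
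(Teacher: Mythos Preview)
The survey does not prove this theorem at all: it merely records the statement with the citation ``(see \cite[Theorem 4.7]{Vamos2010})'' and, in the preceding paragraph, indicates the ingredients---Levy's bound on the size of indecomposable matrices over a Dedekind domain, together with its extension and application by V\'amos and Wiegand. There is therefore no proof in the paper to compare your proposal against.

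That said, your outline is consistent with the strategy the survey sketches in words: reduce via Lemma~\ref{lemeq} to an equivalent block-diagonal matrix using Levy's theorem, then arrange matters so that Lemma~\ref{lemHen} applies to each square block. One caution: the sentence ``over a Dedekind domain, a square matrix whose size is large enough to contain a trivial ideal class representative is equivalent to a diagonal matrix'' is not a correct statement as it stands---size alone does not force diagonalizability over a non-PID Dedekind domain (the Steinitz class of the image is an obstruction regardless of $n$). The genuine content, as you yourself note in the final paragraph, is the V\'amos--Wiegand argument that the indecomposable pieces can be assembled into square blocks on which the class obstructions cancel; your sketch correctly identifies this as the technical core but does not supply it. So your proposal is a faithful high-level plan, but the step you flag as ``the main obstacle'' is indeed where all the work lies, and it is not carried out here any more than it is in the survey.
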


Unfortunately we do not know a criterion. The only sufficient condition we know for a matrix not to be 2-good is given by Proposition \ref{propVam}. For rings $R$ of algebraic integers this proposition is of limited use. Since ideals in Dedekind domains need at most 2 generators, condition (1) can be fulfilled only for $n=2$. Concerning condition (2) it is not hard to see that, if the unit group is infinite, there is a nonzero sum of two units in every nonzero ideal in a ring of algebraic integers. Therefore we can apply Proposition \ref{propVam} only to the non-PID complex quadratic case.

\begin{corollary}\cite[Example 4.11]{Vamos2010}
Let $R$ be the ring of integers of $\Q(\sqrt{-d})$, where $d>0$ is squarefree and $R$ has class number $c>1$. Then $u(M_2(R))=3$ and $u(M_n(R))=2$ for every integer $n \geq 2c$.
\end{corollary}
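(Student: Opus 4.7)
The plan is to handle the two claims separately. For the assertion $u(M_n(R)) = 2$ when $n \geq 2c$, the upper bound is simply the Vámos--Wiegand theorem cited just above, while the lower bound is the trivial observation that $M_n(R)$ contains the zero matrix (which is not a unit), so $M_n(R)$ is never $1$-good. The real content lies in showing $u(M_2(R)) = 3$: the upper bound is Henriksen's theorem, and the work is to show $M_2(R)$ is not $2$-good.

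For the lower bound $u(M_2(R)) \geq 3$, my strategy is to apply Proposition \ref{propVam} with $n=2$. I need a (left) ideal $L = Ra_1 + Ra_2$ of $R$ satisfying: (1) $L$ is not principal, and (2) the only $2$-good element of $L$ is $0$. First I would identify the $2$-good elements of $R$: the hypothesis $d > 0$ squarefree and $c > 1$ excludes $d = 1$ and $d = 3$ (whose imaginary quadratic fields have class number one), so the unit group of $R$ is just $\{\pm 1\}$, and the $2$-good elements of $R$ are exactly $\{-2, 0, 2\}$.

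Next I would produce an appropriate ideal. Since $c > 1$, there is some non-principal ideal $\mathfrak{a}_0$ of $R$. Set $\mathfrak{a} := 3\mathfrak{a}_0$; this lies in the same ideal class as $\mathfrak{a}_0$ and is therefore non-principal. Moreover, $2 \in \mathfrak{a}$ would mean $2 = 3\beta$ for some $\beta \in \mathfrak{a}_0 \subseteq R$, forcing $2/3 \in R \cap \Q = \Z$, which is false. So $\pm 2 \notin \mathfrak{a}$, and the only $2$-good element of $R$ that belongs to $\mathfrak{a}$ is $0$. Because $R$ is a Dedekind domain, every ideal is generated by at most two elements, so $\mathfrak{a} = Ra_1 + Ra_2$, and condition (1) of Proposition \ref{propVam} is exactly the non-principality of $\mathfrak{a}$. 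Applying the proposition yields a $2 \times 2$ matrix over $R$ that is not $2$-good, so $u(M_2(R)) \geq 3$.

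I do not expect any serious obstacle here; the one subtle point worth stating carefully is that the hypothesis $c > 1$ is used twice, once to ensure existence of a non-principal ideal and once, through the exclusion of $d \in \{1,3\}$, to guarantee $R^\times = \{\pm 1\}$ so that the sums-of-two-units in $R$ form the explicit small set $\{-2,0,2\}$. Once this is noted, the multiplication-by-$3$ trick is exactly what is needed to push a representative of a non-trivial ideal class off of $\pm 2$, after which Proposition \ref{propVam} delivers the lower bound and the remaining parts of the corollary are immediate from the theorems already quoted.
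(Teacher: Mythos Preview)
Your proposal is correct and follows exactly the approach the paper outlines in the discussion immediately preceding the corollary: the upper bounds come from the V\'amos--Wiegand theorem (for $n\geq 2c$) and Henriksen's theorem (for $n=2$), while the lower bound for $n=2$ is obtained by applying Proposition~\ref{propVam} to a non-principal ideal avoiding $\pm 2$, which is possible precisely because $c>1$ forces $R^\times=\{\pm 1\}$ and supplies a non-principal ideal. Your multiplication-by-$3$ trick to push a non-principal ideal off $\pm 2$ is a clean way to make this explicit; the paper (and the cited source) leaves this step implicit.
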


\begin{question} \cite[Example 4.11]{Vamos2010}
With the hypotheses of the previous corollary, what is the value of $u(M_n(R))$ for $3\leq n < 2c$?
\end{question}

\begin{question} \cite[Question 4.12]{Vamos2010}
If $R$ is any ring of algebraic integers with class number $c$, what is the value of $u(M_n(R))$ for $2 \leq n < 2c$?
\end{question}

\bibliographystyle{plain}
\bibliography{bibliography}

\end{document}